\documentclass[11pt, a4paper]{article}
\pagestyle{myheadings}\markboth{} {}

\oddsidemargin .5cm \evensidemargin .5cm \textwidth=15cm
\textheight=21truecm \unitlength=1cm
\parskip 3mm
\baselineskip 15pt

\usepackage{amsmath,amssymb,latexsym,graphics,epsfig,url}
\usepackage{color}

\usepackage{amssymb}
\usepackage{amsthm}
\usepackage[english]{babel}
\usepackage{epsfig}
\usepackage{graphicx}
\newtheorem{p-lema}{The Parity Lemma}
\newtheorem{theo}{Theorem}[section]
\newtheorem{propo}[theo]{Proposition}
\newtheorem{lema}[theo]{Lemma}

\input amssym.def
\newsymbol\rtimes 226F
\newfont{\nset}{msbm10}
\newcommand{\ns}[1]{\mbox{\nset #1}}

\def\Col{\mbox{{\cal Col}}}

\def\Ei{{\cal E}}

\def\R{\ns{R}}

\def\matrix0{{\mbox {\boldmath $O$}}}

\def\e{{\mbox{\boldmath $e$}}}
\def\ep{\epsilon}

\def\vv{{\mbox{\boldmath $v$}}}

\def\vec0{\mbox{\bf 0}}

\def\dist{\mathop{\partial }\nolimits}

\def\mod{\mathop{\rm mod }\nolimits}

%\def\span{\mathop{\rm span }\nolimits}

 %1.5mm , 1.5mm

\begin{document}

\title{Some Results on the Structure of \\  Multipoles in the Study of Snarks
\thanks{Research supported by the Ministerio de Educaci\'on y
Ciencia, Spain, and the European Regional Development Fund under
project MTM2011-28800-C02-01 and by the Catalan Research Council
under project 2005SGR00256.}}

\author{M.A. Fiol, J. Vilaltella
\\ \\
{\small Universitat Polit\`ecnica de Catalunya, BarcelonaTech} \\
{\small Dept. de Matem\`atica Aplicada IV, Barcelona, Catalonia}\\
{\small (e-mails: {\tt
\{fiol,joan.vilaltella\}@ma4.upc.edu})} \\
 }
%\date{}
\maketitle

\noindent {\em Keywords:} Cubic graph, edge-coloring, snark, multipole, Parity Lemma, states, color complete, color closed, separable, irreducible, tree, cycle, linear recurrence.

\noindent {\em AMS classification:} 05C15, 05C05, 05C38.
\begin{abstract}
 Multipoles are the pieces we obtain by cutting some edges of a cubic graph. As a result of the cut, a multipole $M$ has dangling edges with one free end, which we call semiedges. Then, every 3-edge-coloring of a multipole induces a coloring or state of its semiedges, which satisfies the Parity Lemma. Multipoles have been extensively used in the study of snarks, that is, cubic graphs which are not 3-edge-colorable. Some results on the states and structure of the so-called color complete and color closed multipoles are presented. In particular, we give lower and upper linear bounds on the minimum order of a color complete multipole, and compute its exact number of states. Given two multipoles $M_1$ and $M_2$ with the same number of semiedges, we say that $M_1$ is reducible to $M_2$ if the state set of $M_2$ is a non-empty subset of the state set of $M_1$ and $M_2$ has less vertices than $M_1$. The function $v(m)$ is defined as the maximum number of vertices of an irreducible multipole with $m$ semiedges. The exact values of  $v(m)$ are only known for $m\le 5$. We prove that tree and cycle multipoles are irreducible and, as a byproduct, that $v(m)$ has a linear lower bound.
\end{abstract}

\section{Introduction}

A cubic graph can be subdivided by an edge cut into parts called multipoles. As a result of the cut, a multipole has half-edges with one free end. An isolated edge with both free ends can be viewed as a multipole, too. A multipole need not be connected. We call \mbox{$m$-pole} a multipole with $m$ free ends. According to the Parity Lemma, if a multipole can be 3-edge-colored then the number of free ends of each color must have the same parity as $m$. Therefore, the colorings of the free ends are not arbitrary. We refer to such colorings as {\em states}. Multipoles and their states are used to study non-3-edge-colorable cubic graphs, called {\em snarks}. See, for instance, Isaacs \cite{i75}, Gardner \cite{g76}, Goldberg \cite{go81}, and Fiol \cite{f91}. A multipole with all states permitted by the Parity Lemma is known as a {\em color complete} multipole, and a multipole that has at least one state in common with any other multipole is known as a {\em color closed} multipole. Nedela and Skoviera \cite{ns96} posed the question whether non-color complete color closed multipoles exist for $m\ge 5$. Karab\'a\v{s}, M\'ac\v{a}jov\'a and Nedela \cite{kmn13} afirmatively answered this question, giving examples obtained by computer search. We also give examples for $m\in \{5,6\}$ and explain a method to obtain them. Moreover, we give other results on the states and structure of multipoles, in particular of color complete multipoles. Namely, we give an exact formula for the number of states of a color complete $m$-pole, we prove that color complete $m$-poles exist for every non-trivial value of $m$, and we give lower and upper bounds on the minimum order of a color complete $m$-pole as a function of $m$. Given two multipoles $M_1$ and $M_2$ with the same number of semiedges, we say that $M_1$ is reducible to $M_2$ if the state set of $M_2$ is a non-empty subset of the state set of $M_1$ and $M_2$ has less vertices than $M_1$. The function $v(m)$, introduced by Fiol \cite{f91}, is defined as the maximum number of vertices of an irreducible $m$-pole. Only a few values of $v(m)$ are known. We prove that tree and cycle multipoles are irreducible and, as a byproduct, that $v(m)$ has a linear lower bound.

\section{Multipoles and Tait colorings}
A {\em multipole} or {\em $m$-pole}  $M=(V,E,{\cal E})$ consists of a finite set of vertices $V=V(M)$, a set of edges $E=E(M)$ which are unordered pairs of vertices, and a set ${\cal E}={\cal E}(M)$ whose $m$ elements $\epsilon_1,\ldots,\epsilon_m$ are called {\em semiedges}.
Each semiedge is associated either with one vertex or with another semiedge making up what is usually called an {\em isolated} or {\em free} edge. For instance, Fig. \ref{7-pole}$(a)$ shows a $7$-pole with two free edges. Notice that a multipole can be disconnected or even be `empty', in the sense that it can have no vertices. The diagram of a generic $m$-pole will be as shown in Fig. \ref{7-pole}$(b)$.
%If $M$ is just a free edge, we denote it by \textbf{e}.

\begin{figure}[h]
\begin{center}
%\vskip-1cm
\includegraphics[width=12cm]{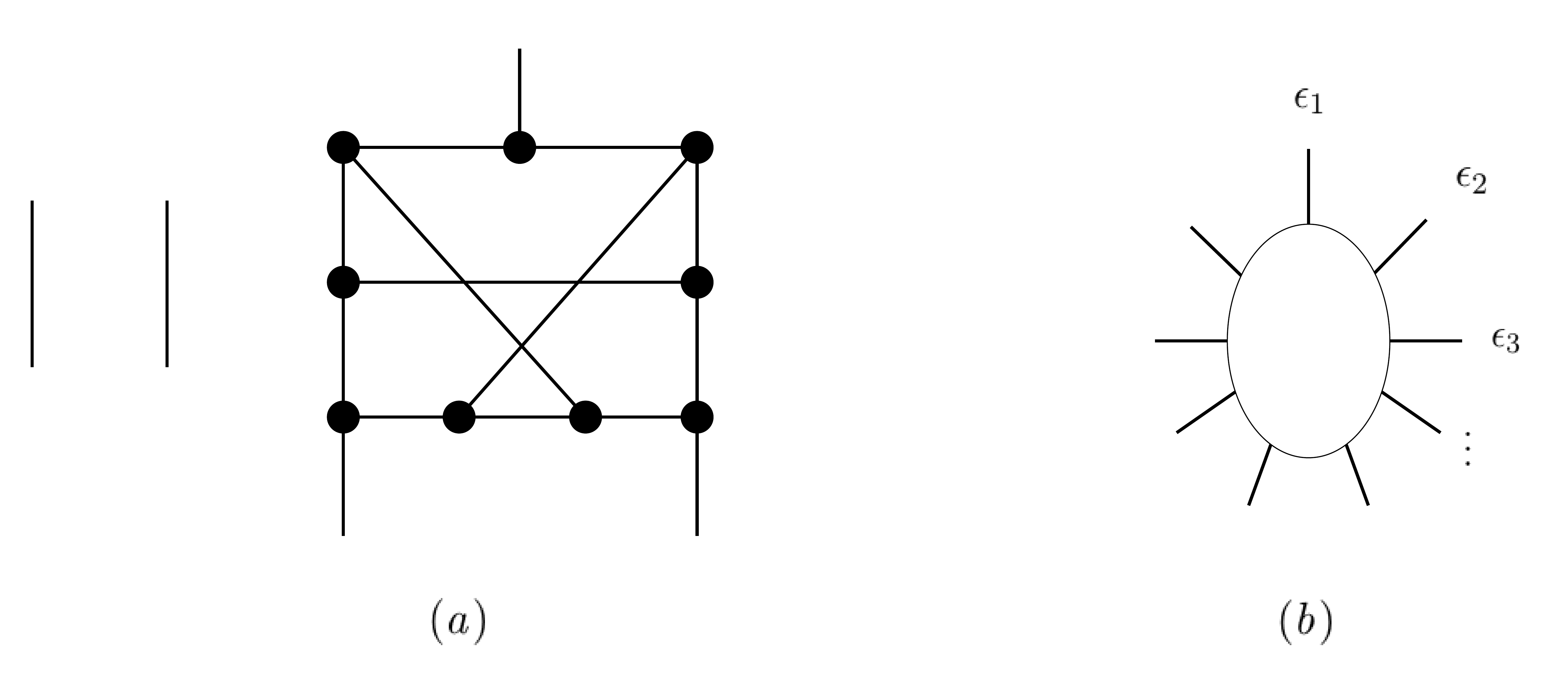}
\caption{A 7-pole with two free edges $(a)$ and a generic multipole $(b)$}
\label{7-pole}
\end{center}
\end{figure}

In this paper we follow notation in \cite{f91,fv12}. For instance, if the semiedge $\epsilon$ is associated with vertex $u$, we say that $\epsilon$ is {\em incident} to $u$ and, following Goldberg's notation \cite{go81}, we write $\epsilon=(u)$. Also, by joining the semiedges $(u)$ and $(v)$ we get the edge $(u,v)$. The {\em degree} of a vertex $u$, denoted by $\delta(u)$, is the number of edges plus the number of semiedges incident to it. In this paper, we only consider cubic multipoles, that is, those for which $\delta(u)=3$ for all $u\in V$. Thus, the simplest $2$- and $3$-poles are a free edge and a vertex with three incident semiedges. We denote them by $\e$ and $\vv$, respectively.

Given a multipole $M$, we denote by $G[M]$ the graph obtained from $M$ by removing all its semiedges. Using this definition, we say that a multipole
$N$ is {\em contained} in $M$, or that $N$ is a {\em submultipole} of $M$, when $G[N]$ is a subgraph of $G[M]$.
Note that, in this case, $N$ can be obtained from $M$ by `cutting' (in one or more points) some of its edges.
The {\em distance} $\dist(\ep,\zeta)$ between two semiedges $\ep=(u)$ and $\zeta=(v)$ is defined to be the distance in $G[M]$ between its incident vertices $u$ and $v$.

When $G[M]$ is a tree or a forest we refer to $M$ as a {\em tree multipole} or a {\em forest multipole} respectively. Analogously, if $G[M]$ is a cycle we say that $M$ is a {\em cycle multipole}.
A simple counting reasoning allows us to see that a tree $m$-pole has $n=m-2$ vertices, whereas a cycle $m$-pole has $n=m$ vertices.
In general,  for every $m$-pole with $n$ vertices, we have
that $m\equiv n\ \mod 2$ and the following result holds:
\begin{lema}
\label{lema-forests}
Let $M$ be an $m$-pole with  $n=|M|$ vertices and $c$ components. Then,
$$
n\ge m-2c,
$$
with equality if and only if $M$ is a forest multipole.
\end{lema}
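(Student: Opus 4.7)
The plan is to rely on the standard degree-counting identity together with the elementary fact that a connected graph on $k$ vertices has at least $k-1$ edges, with equality exactly for trees. Since the multipole may contain free edges (components with no vertices), a little care is needed to separate the semiedges that are actually incident to vertices from those paired into free edges.

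First I would set up the bookkeeping. Write $m=m_v+2f$, where $m_v$ is the number of semiedges incident to some vertex and $f$ is the number of free edges, and let $E$ denote the number of true edges of $M$ (those joining two vertices). Also split $c=c_v+f$, where $c_v$ counts the components containing at least one vertex. Since $M$ is cubic, summing degrees over vertices gives
$$3n=2E+m_v,$$
so $E=(3n-m_v)/2$.

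Next I would apply the tree inequality component-by-component. For each vertex-containing component with $n_i$ vertices and $E_i$ true edges we have $E_i\ge n_i-1$, with equality if and only if that component is a tree. Summing over the $c_v$ such components yields $E\ge n-c_v$. Substituting $E=(3n-m_v)/2$ gives $3n-m_v\ge 2n-2c_v$, that is,
$$n\ge m_v-2c_v=(m-2f)-2(c-f)=m-2c,$$
which is the desired bound.

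Finally, equality tracking: the inequality above is tight exactly when every vertex-containing component is a tree; the free-edge components trivially have no true edges to worry about, so this is the same as saying $G[M]$ is a forest, i.e.\ $M$ is a forest multipole. The only mildly delicate point is remembering that free edges contribute to $c$ but not to $n$ or $E$; once that accounting is straight, the argument is immediate.
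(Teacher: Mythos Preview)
Your proof is correct and follows essentially the same approach as the paper: both rest on the per-component fact that a connected cubic multipole on $n_i$ vertices with $m_i$ semiedges satisfies $n_i\ge m_i-2$ (equivalently $E_i\ge n_i-1$), with equality exactly for trees, and then sum over components. The paper simply states that per-component inequality and sums, while you unpack it via the global handshake identity and separate out the free edges explicitly; this is a presentational difference, not a different argument.
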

\begin{proof}
For $i=1,\ldots,c$, let $n^{(i)}$ and $m^{(i)}$ denote, respectively, the numbers of vertices and semiedges of the $i$-th component.
Then the result follows from the fact that $n^{(i)}\ge m^{(i)}-2$ with equality if and only if the $i$-th component is a tree multipole.
\end{proof}

Let $C=\{1,2,3\}$ be a set of `colors'. Then, a {\em 3-edge-coloring}  or {\em Tait coloring} of an $m$-pole $(V,E,\Ei)$ is a mapping $\phi: E\cup {\cal E}\rightarrow C=\{1,2,3\}$ such that all the  edges and/or semiedges incident with a vertex receive distinct colors, and each isolated edge has both semiedges with the same color. For example, Fig. \ref{colored-6-pole} shows a Tait coloring of a 6-pole. Note that the numbers of semiedges with the same color have the same parity. The following basic lemma states that this is always the case.

\begin{figure}[h]
\begin{center}
%\vskip-1cm
\includegraphics[width=6cm]{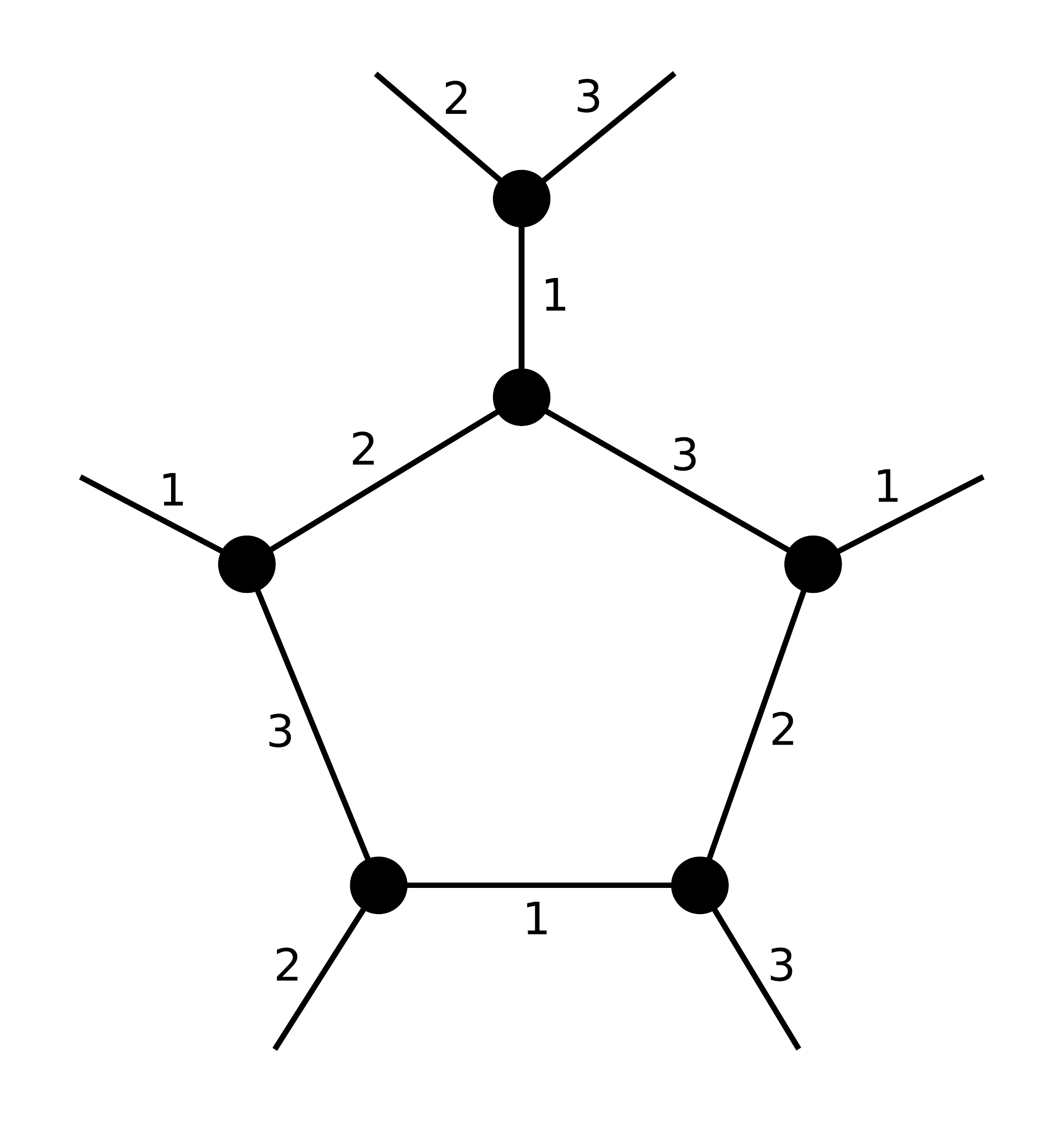}
\caption{A Tait coloring of a 6-pole}
\label{colored-6-pole}
\end{center}
\end{figure}

\noindent{\bf The Parity Lemma.}
{\em Let $M$ be a Tait colored $m$-pole with $m_i$ semiedges having color $i=1,2,3$. Then,
\begin{equation}
\label{parity}
m_1\equiv m_2 \equiv m_3 \equiv m\qquad (\mod\ 2).
\end{equation}
}

This result has been used extensively in the construction of snarks (see, for instance, Isaacs \cite{i75}, Goldberg \cite{go81}, Chetwynd and Wilson \cite{cw81}, Watkins \cite{w83}, Kochol \cite{k96} and Steffen \cite{es98}).

\section{Color completeness}

Given a multipole $M$ with semiedges $\epsilon_1,\ldots,\epsilon_m$, its set of semiedge colorings or {\em states} is
$$
\Col(M)=\{(\phi(\epsilon_1),\ldots,\phi(\epsilon_m) : \mbox{$\phi$ is a Tait coloring of $M$}\}.
$$
A multipole $M$ is {\em color complete} if it has all the possible semiedge colorings allowed by the Parity Lemma. We call these states \textit{admissible}. If a given multipole $M$ has a  state $S$, we say that $S$ is \textit{realizable} in $M$. Two states are \textit{equivalent} if they can be transformed into each other by a permutation of colors, and two multipoles are \textit{color equivalent} if they have the same set of states. Not surprisingly, the number of states $\sigma(m)=|\Col(M)|$ of a color complete $m$-pole coincides with the number of possible colorings of the planar regions in the $m$-ring of an almost triangulation \cite[p. 190]{rgf98}. This is because of the equivalence, proved by Tait \cite{t80}, that the regions of a planar cubic graph $G$ are 4-colorable if and only if $G$ is 3-edge-colorable. Here we use the properties of 3-edge-colored multipoles to give a short derivation of the formula for such a number.

\begin{lema}
\label{number-states-complete}
Let $M$ be a color complete $m$-pole. Then, its number of states is
$$
\sigma(m)=\frac{1}{8}[3^{m-1}+2+(-1)^m 3].
$$
\end{lema}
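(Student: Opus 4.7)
The plan reduces the claim to a purely combinatorial enumeration depending only on $m$: by color completeness together with the Parity Lemma, $\Col(M)$ is exactly the set of tuples $(c_1,\ldots,c_m)\in\{1,2,3\}^m$ whose color counts $m_1,m_2,m_3$ all share the parity of $m$, and --- following the $m$-ring region-coloring interpretation recalled just before the lemma --- considered up to the natural $S_3$-action permuting the three colors.

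The first step I would carry out is to enumerate the ordered admissible tuples by a short parity-character argument. Writing each condition $m_i\equiv m\pmod 2$ as the indicator $\tfrac12\bigl(1+(-1)^{m+m_i}\bigr)$, multiplying out the resulting product over $i\in\{1,2,3\}$, and swapping the order of summation reduces the count to a sum over $T\subseteq\{1,2,3\}$ whose inner factor evaluates to $(3-2|T|)^m$, because $\sum_{c=1}^{3}(-1)^{[c\in T]}=3-2|T|$. A quick grouping by $|T|$ yields $N(m)=(3^m+3(-1)^m)/4$; the same number also drops out of the $\cosh^3 x$ and $\sinh^3 x$ exponential generating functions that encode tuples whose color counts are all even or all odd, respectively. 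The second step is to pass from $N(m)$ to $\sigma(m)$ via Burnside's lemma applied to the color-permuting $S_3$-action: the identity fixes all $N(m)$ tuples, each of the three transpositions $(ij)$ fixes only the constant state in its unswapped color (admissible precisely when $m$ is even), and the two $3$-cycles have no fixed color and hence fix nothing. Averaging these contributions over $|S_3|=6$ and simplifying algebraically collapses to the displayed $\tfrac{1}{8}[3^{m-1}+2+(-1)^m\,3]$.

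The whole argument is elementary; the single conceptual step requiring care is identifying the displayed formula as an $S_3$-orbit count on admissible states (equivalently, via Tait's correspondence, as the number of proper $4$-colorings of the cycle $C_m$ modulo $S_4$) rather than as the raw enumeration $N(m)$. Once that is pinned down, both the character-sum evaluation and the Burnside fixed-point calculation are short, and the final algebraic simplification is only a line or two.
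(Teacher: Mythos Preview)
Your argument is correct, and it is genuinely different from the paper's. The paper does not count directly: it first establishes the recurrence
\[
\sigma(m)=2\sigma(m-1)+3\sigma(m-2)-1,\qquad \sigma(2)=\sigma(3)=1,
\]
by a case analysis on the colors of the first two semiedges (splitting into $\phi(\epsilon_1)\neq\phi(\epsilon_2)$ and $\phi(\epsilon_1)=\phi(\epsilon_2)$, with a correction term for the monochromatic state that produces equivalent colorings), and then solves this linear recurrence. Your route instead separates the two ingredients cleanly: the parity-character computation gives the raw count $N(m)=(3^m+3(-1)^m)/4$ of admissible ordered tuples, and Burnside over the $S_3$-action on colors converts this to the equivalence-class count $\sigma(m)$. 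The advantage of your approach is conceptual transparency---one sees exactly where the parity constraint and the color symmetry each enter, and no recurrence needs to be guessed or solved; the paper's approach, on the other hand, is more self-contained in that it never invokes group actions or Burnside, and its recurrence also makes the connection to the almost-triangulation ring-coloring count (mentioned just before the lemma) more visible. Your care in noting that $\sigma(m)$ must be read as an $S_3$-orbit count rather than $|\Col(M)|$ literally is well placed: the paper's notation is loose on this point, but the initial values $\sigma(2)=\sigma(3)=1$ and the whole recurrence argument confirm that equivalence classes are what is being counted.
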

\begin{proof}
We prove that $\sigma(m)$ satisfies the following recurrence:
\begin{equation}
\label{recurC(m)}
\sigma(m)=2\sigma(m-1)+3\sigma(m-2)-1,
\end{equation}
with initial values $\sigma(2)=\sigma(3)=1$. Note that, given the semiedges $\epsilon_1$ and $\epsilon_2$, the set of states can be partitioned into two subsets: Those with $\phi(\epsilon_1)=\phi(\epsilon_2)$ and those with $\phi(\epsilon_1) \neq \phi(\epsilon_2)$ for some Tait coloring $\phi$. Here and henceforth, the letters $a,b,c$ stand for the colors 1,2,3 in any order, and `$*$' denotes any color. Then we have the following facts:
\begin{itemize}
\item[$(i)$]
Every real\-izable state $(a,*, \stackrel{(m-2)}{\ldots}, *)$ of an $(m-1)$-multipole
gives rise to two \mbox{realizable} states, $(b,c,*,  \stackrel{(m-2)}{\ldots}, *)$, and $(c,b,*, \stackrel{(m-2)}{\ldots}, *)$, of an $m$-pole.
\item[$(ii)$]
Every state $(a,*, \stackrel{(m-3)}{\ldots}, *)$ of an $(m-2)$-multipole
gives rise to three realizable states, $(a,a,a,*, \stackrel{(m-3)}{\ldots}, *)$, $(b,b,a,*, \stackrel{(m-3)}{\ldots}, *)$, and $(c,c,a,*, \stackrel{(m-3)}{\ldots}, *)$, of an $m$-pole.
\item[$(iii)$]
If $m$ is odd, in $(i)$ the realizable state $(a, \stackrel{(m-1)}{\ldots}, a)$ gives two equivalent states. \mbox{Similarly}, if $m$ is even,  in $(ii)$ the realizable state $(a,\stackrel{(m-2)}{\ldots}, a)$ gives two equivalent states (in both cases, up to permutation of colors $b$ and $c$).
\end{itemize}
Then, a particular solution of \eqref{recurC(m)} is $\sigma(m)=1/4$, whereas a  general solution of the corresponding homogeneous equation turns out to be $\sigma(m)=\alpha 3^{m}+\beta (-1)^m$ where $\alpha,\beta\in \R$. Adding up and imposing the initial conditions we obtain the result.
\end{proof}

\noindent\textbf{Observation}: If $m_1 \geq 2$ and $m_2 \geq 2$, a straightforward calculation gives the following inequality:
$$\sigma(m_1)·\sigma(m_2)<\sigma(m_1+m_2).$$
Note that $m_1=1$ or $m_2=1$ are trivial situations, because a 1-pole is not Tait colorable ($\sigma(1)=0$). As a consequence of the inequality, a color complete multipole must be connected. Otherwise, the total number of its states, as obtained from the combination of states of its connected components, would be too small.

Color complete multipoles can be constructed recursively by using the following result:
\begin{propo}
Let $M_1$ and $M_2$ be two color complete multipoles, with $m^{(1)}+r$ and $m^{(2)}+r$  semiedges respectively, where $m^{(1)}, m^{(2)}\ge 2$, $r\ge 2$ if $r$ is even, and $r\ge 3$ if $r$ is odd.  For $k=1,\ldots,r$, choose the semiedges $(u_k)$  from $M_1$ and $(v_k)$ from $M_2$ and join them to obtain the edges $(u_k,v_k)$. Then, the obtained multipole $M$, with $m=m^{(1)}+m^{(2)}$ semiedges, is also color complete.
\end{propo}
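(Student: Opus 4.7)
\medskip

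\noindent\textbf{Proof plan.} The obvious strategy is to show the reverse inclusion: take an arbitrary admissible state $S$ of $M$ and exhibit a Tait coloring of $M$ realizing it by lifting $S$ to compatible Tait colorings of $M_1$ and $M_2$. The ``lifting'' is where all the work sits: we must choose colors $e_1,\ldots,e_r$ for the newly joined edges $(u_k,v_k)$ so that the induced color sequence on each of $M_1$ and $M_2$ is admissible in the respective multipole. Once such $e_k$ are found, color completeness of $M_1$ and $M_2$ supplies the two Tait colorings, and since they agree on every joined edge by construction, they paste together into a Tait coloring of $M$ that realizes $S$.

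To implement the lift, split $S$ as $(S^{(1)},S^{(2)})$ according to whether a semiedge of $M$ comes from $M_1$ or from $M_2$, and write $m_i^{(S,1)}$, $m_i^{(S,2)}$ for the number of semiedges in $S^{(1)}$, $S^{(2)}$ of color $i=1,2,3$. Let $a_i$ denote the number of joined edges to be colored $i$, so $a_1+a_2+a_3=r$. The Parity Lemma applied to $M_1$ reads $m_i^{(S,1)}+a_i\equiv m^{(1)}+r\pmod 2$, which pins down $a_i\pmod 2$; the analogous condition for $M_2$ gives the same congruence \emph{iff} $m_i^{(S,1)}+m_i^{(S,2)}\equiv m^{(1)}+m^{(2)}=m\pmod 2$, which is exactly the admissibility of $S$ in $M$. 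Hence the three required parities $p_i\in\{0,1\}$ are well defined.

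A short sum check shows $p_1+p_2+p_3\equiv r\pmod 2$, so the parity constraints are internally consistent, and it only remains to exhibit nonnegative integers $a_i$ of the prescribed parities with $a_1+a_2+a_3=r$. I expect a four-case analysis on the multiset $\{p_1,p_2,p_3\}$: if $r$ is even, either all $p_i=0$ (take $(a_1,a_2,a_3)=(r,0,0)$) or exactly two $p_i=1$, say $p_1=p_2=1$ (take $(1,1,r-2)$); if $r$ is odd, either a single $p_i=1$ (take $(r,0,0)$ with the one $p_i$ corresponding to $a_1$) or all three $p_i=1$ (take $(1,1,r-2)$). Nonnegativity in the $(1,1,r-2)$ subcases is exactly why the hypothesis requires $r\ge 2$ when $r$ is even and $r\ge 3$ when $r$ is odd; this is the one subtle point of the argument and, accordingly, the main obstacle.

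Once the $a_i$ are produced, assign the $r$ colors to the joined edges in any order. By construction the restrictions of this extended coloring are admissible states of $M_1$ and $M_2$, hence realized by Tait colorings, which paste into a Tait coloring of $M$ with the prescribed state $S$. Since the reverse inclusion ($\mathrm{Col}(M)$ is contained in the set of admissible states) is automatic from the Parity Lemma, this shows $M$ is color complete.
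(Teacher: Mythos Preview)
Your proposal is correct and follows essentially the same approach as the paper: both arguments fix an admissible state of $M$, derive the parity constraints that the colors on the $r$ joined edges must satisfy, check that these constraints are mutually compatible precisely because $S$ is admissible in $M$, and then invoke color completeness of $M_1$ and $M_2$ to realize the extended states. Your treatment is in fact slightly more explicit than the paper's: you verify the internal consistency $p_1+p_2+p_3\equiv r\pmod 2$ and then exhibit concrete nonnegative triples $(a_1,a_2,a_3)$ summing to $r$ in each parity case, which pinpoints exactly where the hypotheses $r\ge 2$ (even) and $r\ge 3$ (odd) enter; the paper leaves the existence of such $r_i$ implicit and instead argues separately why $r=0$ and $r=1$ fail.
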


\begin{proof}
Given a state $S$ of $M$,
and for $j=1,2$ and $i=1,2,3$, let $m_i$, $r_i$ and $m^{(j)}_i$ be, respectively, the number of semiedges of $M$, the number of edges $(u_i,v_i)$ of $M$, and the number of semiedges of $M_j$, having color $i$. Then, $m^{(j)}=m^{(j)}_1+m^{(j)}_2+m^{(j)}_3$ and \mbox{$r=r_1+r_2+r_3$} and, by the Parity Lemma, we must have (all congruences are modulo 2, and $i=1,2,3$):
\begin{align}
m_i^{(1)}+m_i^{(2)} & \equiv m^{(1)}+ m^{(2)},  \label{comp1}\\
m_i^{(1)}+r_i & \equiv m^{(1)}+r,  \label{comp2}\\
m_i^{(2)}+r_i & \equiv m^{(2)}+r. \label{comp3}
\end{align}
Now, we claim that there exist states $S_1$ of $M_1$ and $S_2$ of $M_2$ inducing the state  $S$ of $M$.
In other words, given $m_i^{(1)}$ and $m_i^{(2)}$ satisfying \eqref{comp1}, we want to find some values of $r_i$, $i=1,2,3$, satisfying \eqref{comp2} and \eqref{comp3}. This gives
\begin{align}
r_i & \equiv m^{(1)}+r-m_i^{(1)},  \nonumber\\
r_i & \equiv m^{(2)}+r-m_i^{(2)}, \nonumber
\end{align}
which has a solution if and only if the two right hand terms have the same parity. That is,
$$
m^{(1)}-m_i^{(1)}\equiv m^{(2)}-m_i^{(2)}.
$$
But this is precisely the condition \eqref{comp1}.
To see why the condition on $r$ is necessary, note first that the case $r=0$ is discarded by the previous observation that a color complete multipole must be connected. Hence, if $r$ is even, then $r \geq 2$. Consider now the case $r=1$. We are assuming that $m^{(1)} \geq 2$. If $m^{(1)}+r$ is odd, the state with $m^{(1)}_1 \equiv m^{(1)}_2 \equiv m^{(1)}_3 \equiv \alpha$, where $\alpha \in \{0,1\}$, imposes $r_1 \equiv r_2 \equiv r_3 \equiv 1-\alpha$. Therefore, if $\alpha=0$ then $r_1,r_2,r_3 \geq 1$, and if $\alpha=1$ then $r_i \geq 2$ for some $i \in \{1,2,3\}$. The case with even $m^{(1)}+r$ is similar.
\end{proof}

As a consequence, we have the following result on the minimum order $n(m)$ of a color complete $m$-pole:

\begin{propo}
For a given $m\ge 5$, the minimum number of vertices of a color complete $m$-pole satisfies the bounds:
\begin{itemize}
\item[$(a)$]
If $m$ is odd, then $m+2\le n(m)\le 10m-37$.
\item[$(b)$]
If $m$ is even, then $m+2\le n(m)\le 10m-40$.
\end{itemize}
\end{propo}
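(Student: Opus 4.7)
The plan is to prove the two bounds separately. For the lower bound, I combine the Observation that any color complete multipole is connected, the trivial parity $n \equiv m \pmod{2}$, and Lemma~\ref{lema-forests}, which with $c = 1$ forces $n \geq m - 2$ with equality iff $M$ is a tree. Consequently the possible values are $n \in \{m-2, m, m+2, \ldots\}$, so to obtain $n \geq m+2$ it suffices to exclude $n = m-2$ (trees) and $n = m$ (unicyclic multipoles).

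For the tree case, any cubic tree $m$-pole with $m \geq 4$ contains at least two vertices of tree-degree $1$, each carrying two semiedges that, together with its one internal edge, must use all three colors; in particular the two semiedges at such a ``leaf'' have distinct colors. I would exhibit an admissible state violating this constraint: for even $m$ the monochromatic state $(1,\ldots,1)$, and for odd $m \geq 5$ the state with $m-2$ ones, one $2$, and one $3$, arranged so that the two positions at a chosen leaf are both colored $1$. Both states are Parity-Lemma-admissible but unrealizable, ruling out trees. For the unicyclic case, either some vertex of $M$ carries two semiedges (the leaf of an attached subtree), whereupon the preceding argument again applies, or every vertex carries exactly one, which together with the cubic condition and connectivity forces $M = C_m$. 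For $C_m$ with $m \geq 4$, each Tait coloring is determined by a proper 3-edge-coloring of the underlying $m$-cycle, of which there are exactly $2^m + 2(-1)^m$; a direct comparison shows this is strictly smaller than the number $(3^m + 3(-1)^m)/4$ of all admissible states, so $C_m$ is not color complete.

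For the upper bound, I apply the preceding recursive-construction proposition inductively. The key ingredient is a color complete $6$-pole $M_0$ of order at most $20$, used as a ``growth module.'' Given any color complete $m$-pole $M_1$ on $n(m)$ vertices, I join $M_0$ to $M_1$ via $r = 2$ edges (permitted because $r = 2$ is even and the hypotheses $m^{(1)}, m^{(2)} \geq 2$ are satisfied); the proposition then delivers a color complete $(m+2)$-pole on $n(m) + 20$ vertices, yielding the recurrence $n(m+2) \leq n(m) + 20$. Starting from base cases $n(5) \leq 13$ and $n(6) \leq 20$, an immediate induction gives $n(m) \leq 10m - 37$ for odd $m \geq 5$ and $n(m) \leq 10m - 40$ for even $m \geq 6$, as claimed.

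The main obstacle is the construction of the two base cases, which requires exhibiting explicit color complete $5$- and $6$-poles of orders at most $13$ and $20$; I expect this is done by iteratively applying the recursive-construction proposition starting from the trivial color complete $2$-pole (isolated edge) and $3$-pole (single vertex), followed by a direct verification of color completeness at each intermediate stage. Once the bases are in place, the inductive step is immediate. On the lower bound side the most delicate point is the cycle-counting comparison $2^m + 2(-1)^m < (3^m + 3(-1)^m)/4$, which must be verified for both parities of $m \geq 4$ but follows easily from $3^m/4$ growing strictly faster than $2^m$.
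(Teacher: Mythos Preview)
Your approach is essentially the same as the paper's, with only cosmetic differences in the lower-bound argument and one point worth flagging in the upper bound.

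For the lower bound, the paper argues more directly: for $m\ge 4$ a color complete $m$-pole cannot have two semiedges incident to a common vertex (this immediately kills trees and any unicyclic multipole with a pendant subtree), so a connected candidate on $m$ vertices must be the cycle $C_m$, which is then excluded by exhibiting an unrealizable admissible state. Your version---ruling out $n=m-2$ via leaves and $n=m$ via a counting comparison $2^m+2(-1)^m<(3^m+3(-1)^m)/4$---is a valid alternative and arguably cleaner for the cycle case, since it avoids having to name a specific forbidden pattern.

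For the upper bound, your inductive scheme $n(m+2)\le n(m)+20$ via the recursive-construction proposition with $r=2$ is exactly what the paper does. The one genuine weakness is your plan for the base cases: you cannot bootstrap the recursive-construction proposition from the $2$-pole $\e$ or the $3$-pole $\vv$, because its hypotheses require $m^{(1)},m^{(2)}\ge 2$ \emph{after} removing the $r$ glued semiedges, and with $r\ge 2$ neither $\e$ nor $\vv$ has enough semiedges left over. The paper does not build the base cases recursively at all; it simply exhibits explicit color complete $5$- and $6$-poles on $13$ and $20$ vertices (Figure~\ref{color complete 5-6-pole}) and takes their color completeness as verified. You should do the same: treat the base cases as concrete constructions to be checked directly, not as outputs of the proposition.
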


\begin{proof}
First we deal with the lower bounds.
In the trivial case of the 3-pole $\vv$ with only one vertex, the Parity Lemma implies that the three semiedges  have different color. The one-vertex 3-pole, having a single admissible state, is trivially complete and the only complete multipole with two semiedges incident to the same vertex.

For $m \geq 4$, it is obvious that a complete $m$-pole cannot have two semiedges incident to the same vertex. This would preclude a common color for both semiedges, a possibility admitted by the Parity Lemma. By the observation above, a color complete multipole must be connected. Moreover, if a connected multipole with no two semiedges at distance 0 has $m$ semiedges and only $m$ vertices, then it must be a cycle with a semiedge on each vertex, but such an $m$-pole cannot be complete either (again, with the trivial exception of the cycle 3-pole) because certain states admissible by the Parity Lemma are not possible in a cycle $m$-pole (see Section 5, Figure 5). Also according to the Parity Lemma, a multipole with $m$ vertices cannot have $m+1$ semiedges. Summarizing, if $m \geq 4$, a complete $m$-pole must have at least $m+2$ vertices.

Now we deal with the upper bounds.
Color complete multipoles can be constructed recursively as seen in Proposition 3.2, and specifically with $r=2$. Explicit color complete multipoles are known for $m=5$, Fig. \ref{color complete 5-6-pole}$(a)$, and $m=6$, Fig. \ref{color complete 5-6-pole}$(b)$, with 13 and 20 vertices respectively. This allows us to start the recursive \mbox{construction} of color complete multipoles with higher values of $m$. Let $M_m$ denote a color complete $m$-pole of minimum order, and $|V(M_m)|$ its number of vertices.
Then, for $m \geq 7$,
$$
|V(M_m)|=|V(M_{(m-2)+6-4})| \leq |V(M_{m-2})|+|V(M_6)| = |V(M_{m-2})|+ 20,
$$
and this gives $|V(M_m)| \leq 10m-37$ for $m$ odd, and
$|V(M_m)| \leq 10m-40$ for $m$ even, as claimed. \end{proof}

Although we think that these bounds can be improved, we already see a linear \mbox{behavior}. As a corollary of the recursive construction, we conclude that color complete $m$-poles exist for any $m \geq 2$.\\

\begin{figure}[t]
\begin{center}
%\vskip-1cm
\includegraphics[width=15cm]{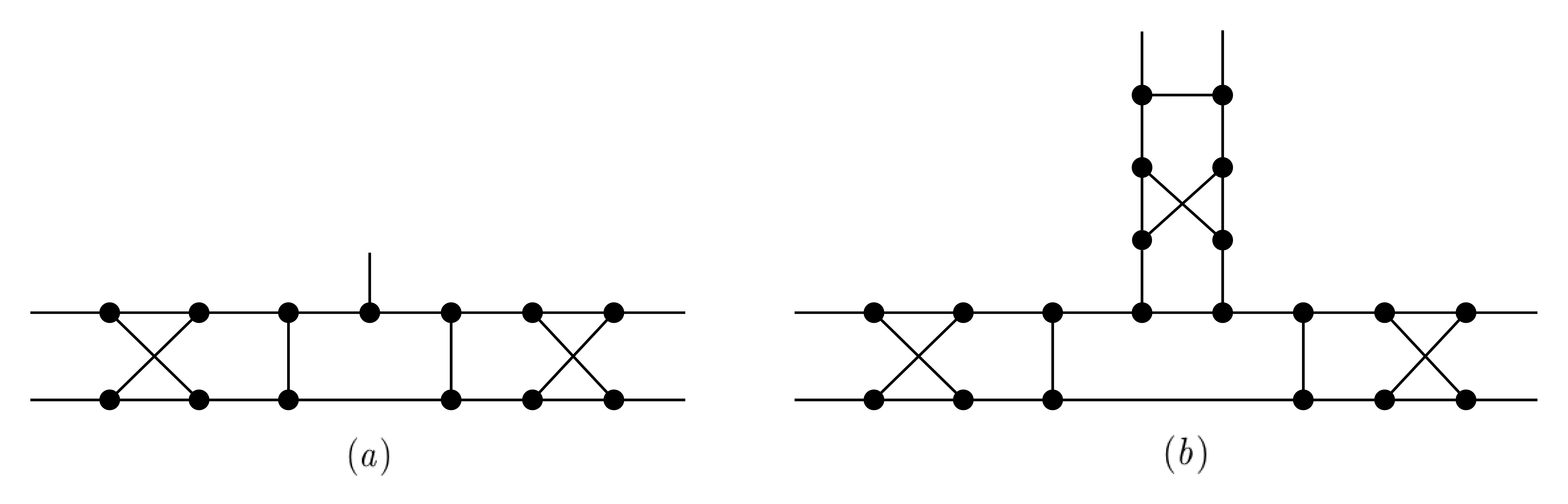}
\caption{Color complete 5- and 6-poles}
\label{color complete 5-6-pole}
\end{center}
\end{figure}

\section{Color closedness}

Following the terminology of Nedela and Skoviera \cite{ns96}, we say that an $m$-pole is \textit{color closed} if its state set has non empty intersection with the state set of any other Tait colorable \mbox{$m$-pole}. Obviously, such a multipole cannot be contained in a snark because there is always at least one possible coloring of its $m$ semiedges. A color complete multipole is trivially color closed.

If $\mu(m)$ and $\sigma(m)$ are respectively the minimum and maximum number of states of an $m$-pole, then, by the pigeonhole principle, any $m$-pole with more than $\sigma(m)-\mu(m)$ states must be color closed. If it has less than $\sigma(m)$ states, then it is non-trivially color closed.

\begin{lema}
\label{number-states-minimum}
The minimum number of states of Tait colorable $4$-, $5$-, and $6$-poles are 2, 3, and 5 respectively.
\end{lema}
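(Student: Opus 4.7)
The plan is to establish each of the three equalities $\mu(4)=2$, $\mu(5)=3$, $\mu(6)=5$ by exhibiting explicit Tait colorable multipoles for the upper bounds and by Kempe chain arguments for the lower bounds. For the upper bounds I would take two disjoint free edges $\e\sqcup\e$ for $m=4$, the disjoint union $\e\sqcup\vv$ of a free edge and a single vertex for $m=5$, and three disjoint free edges for $m=6$. The Tait colorings are easy to enumerate in each case (each free edge is independently assigned one of three colors, and the vertex $\vv$ realizes every permutation of the three colors on its three semiedges), and a direct count of orbits under the $S_3$-action on colors yields respectively $2$, $3$, and $1+3+1=5$ states, where the middle contribution in the last count enumerates the three inequivalent ways of singling out one of the three labeled edges as the ``different'' one.

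For the lower bounds I would use Kempe chain swaps: for any Tait coloring of an $m$-pole and any unordered pair of colors $\{i,j\}$, the $ij$-colored edges and semiedges decompose into a disjoint union of alternating paths (each with two semiedge endpoints) and cycles, and exchanging the two colors along a single chain yields another Tait coloring that agrees with the original except at the two semiedge endpoints of that chain. The color-permutation invariant of a state that I would use throughout is the \emph{partition} of the labeled semiedges by color class; two states lie in the same $S_3$-orbit exactly when they induce the same such partition, so different partitions certify different orbits. For $m=4$, a case split on the color multiset of the starting coloring --- either $(4,0,0)$ or $(2,2,0)$ --- shows that a suitable single chain swap always produces a second state inducing a new partition (in the first case the multiset itself changes, and in the second the $ab$-chain through one $b$-semiedge ends at an $a$-semiedge and swapping rearranges the pair-partition of the four positions). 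For $m=5$, every state has multiset $(3,1,1)$ up to color permutation; labeling the majority color $a$ and the unique minority-colored semiedges $\epsilon_{b},\epsilon_{c}$, the $ab$-chain through $\epsilon_{b}$ ends at some $a$-colored semiedge $\epsilon_{a_{1}}$ and the $ac$-chain through $\epsilon_{c}$ ends at some $a$-colored semiedge $\epsilon_{a_{j}}$. The three states obtained by doing nothing, by swapping the $ab$-chain, and by swapping the $ac$-chain carry their pair of non-majority-colored semiedges at the unordered pairs $\{\epsilon_{b},\epsilon_{c\}}$, $\{\epsilon_{a_{1}},\epsilon_{c}\}$, $\{\epsilon_{a_{j}},\epsilon_{b}\}$, which are pairwise distinct; hence three distinct orbits, confirming $\mu(5)\ge 3$.

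The hard part is the $m=6$ lower bound. My plan is to start from any Tait coloring and, via Kempe swaps, generate at least five distinct partitions on the six labeled semiedges. When all three multiset classes $(6,0,0)$, $(4,2,0)$, $(2,2,2)$ are realizable in $M$, they already contribute three distinct orbits, and two further orbits in the $(4,2,0)$ class can be produced by performing Kempe swaps on three different same-color-endpoint chains, placing the minority-colored pair at three structurally different positions; starting from a $(6,0,0)$ state the three $ab$-chains pair the six $a$-semiedges and their three swaps give three distinct pairs at once. The main obstacle is the case where some multiset class is not realizable --- for instance when some vertex of $M$ carries two semiedges the multiset $(6,0,0)$ is excluded --- and in such cases the five required orbits must all be found inside the remaining multiset classes. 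This demands a careful simultaneous analysis of the $ab$-, $ac$- and $bc$-chain matchings on the six semiedges, showing that the induced partitions after the various swaps always range over at least five distinct possibilities; the crucial technical point will be ruling out ``accidental coincidences'' in which unusual chain structures force several ostensibly different swaps to produce the same partition.
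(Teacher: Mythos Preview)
Your overall strategy --- explicit minimal multipoles for the upper bounds and Kempe-chain swaps for the lower bounds --- is exactly the paper's approach, and your treatments of $m=4$ and $m=5$ are essentially correct. (One minor slip: in the $(2,2,0)$ subcase for $m=4$ the $ab$-chain through a $b$-semiedge may well end at the \emph{other} $b$-semiedge rather than at an $a$-semiedge; but then the swap produces the $(4,0,0)$ multiset, so the conclusion survives.)

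The genuine gap is $m=6$. You recognize that the difficult case is when some multiset class is not realizable, and then you stop, saying only that it ``demands a careful simultaneous analysis'' and that one must rule out ``accidental coincidences''. That is precisely the content of the lemma at this point, and it is not obvious: the chain matchings can conspire, and one needs a concrete argument that five distinct partitions always arise. Your case split (``all three multiset classes realizable'' versus ``not'') also front-loads an assumption you never verify --- you do not show, for instance, that a $(2,2,2)$ state exists whenever a $(6,0,0)$ state does --- so even the easy branch is not quite nailed down.

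The paper closes this gap with a three-way case split that you should adopt: (i) if the all-one-color state $(a,a,a,a,a,a)$ is realizable, the three $ab$-chains pair up the six semiedges and their swaps give three distinct $(4,2,0)$ orbits, after which a single $ac$-swap on any one of them is forced to land in the $(2,2,2)$ class, yielding the fifth orbit; (ii) if some two-color state $(a,a,a,a,b,b)$ is realizable but $(a,a,a,a,a,a)$ is not, one specific $ab$-swap at the last semiedge gives a second two-color orbit, $ac$-swaps on each of these give two three-color orbits, and a further $ab$-swap at the first semiedge is checked, by exhaustion over its five possible endpoints, to produce a fifth orbit distinct from all four; (iii) if only $(2,2,2)$ states occur, one fixes $S_1=(a,a,b,b,c,c)$, produces $S_2,S_3,S_4$ by targeted swaps so that all four have a repeated pair of consecutive colors, and then observes that an $ac$-swap at the first semiedge of $S_4$ must yield a state with no consecutive repeat, hence a fifth orbit. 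Each of these branches involves exactly the kind of ``ruling out coincidences'' you allude to, and none of them is automatic; you need to carry them out.
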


\begin{proof}
Assume a 4-pole has state $S$. As a consequence of the Parity Lemma, we may assume without loss of generality that either $S=(a,a,a,a)$ or $S=(a,a,b,b)$. Start a Kempe interchange of $a$-$b$ type at the first semiedge. If $S=(a,a,a,a)$, the result must be either $(b,b,a,a)$, $(b,a,b,a)$ or $(b,a,a,b)$, none of them equivalent to $S$. If $S=(a,a,b,b)$, the result must be either $(b,b,b,b)$, $(b,a,a,b)$ or $(b,a,b,a)$, none of them equivalent to $S$. Then, a Tait colorable 4-pole has at least two states. The 4-pole consisting of 2 free edges realizes this possibility. Therefore, $\mu(4)=2$.

Assume a 5-pole has state S. Again as a consequence of the Parity Lemma, we may assume without loss of generality that  $S=(a,a,a,b,c)$. Start a Kempe interchange of $a$-$b$ type at the first semiedge. The result must be either $(b,b,a,b,c)$, equivalent to $(a,a,b,a,c)$, $(b,a,b,b,c)$, equivalent to $(a,b,a,a,c)$, or $(b,a,a,a,c)$. Alternatively, start a Kempe interchange of $a$-$c$ type at the first semiedge. The result must be either $(c,c,a,b,c)$, equivalent to $(a,a,b,c,a)$, $(c,a,c,b,c)$, equivalent to $(a,b,a,c,a)$, or $(b,a,a,c,a)$. Then, a Tait colorable 5-pole must have at least three non-equivalent states. The 5-pole consisting of the disjoint union of $\vv$ and one free edge $\e$ realizes this possibility. Therefore, $\mu(5)=3$.

Assume a 6-pole has state S. We may assume without loss of generality that either $S=(a,a,a,a,a,a)$, $S=(a,a,a,a,b,b)$, or $S=(a,a,b,b,c,c)$. First we prove that a 6-pole with the state $(a,a,a,a,a,a)$ has at least 5 states. Start a Kempe interchange of $a$-$b$ type at the first semiedge. We may assume that the resulting state is $(b,b,a,a,a,a)$. Alternatively, start a Kempe interchange of the same type at the third semiedge. Now we know that the Kempe chain cannot finish at the first or second semiedge. We may assume that the resulting state is $(a,a,b,b,a,a)$. Another possibility is to start the same type of Kempe interchange at the fifth semiedge. Now we know that it cannot finish at one of the first four semiedges. Hence, we assume that the resulting state is $(a,a,a,a,b,b)$. Summarizing, we can obtain at least three new states from $(a,a,a,a,a,a)$. Now take anyone of the three new states and start an arbitrary Kempe interchange of type $a$-$c$. This will give one new state, totalling five. The 6-pole consisting of 3 free edges realizes this possibility. Therefore, $\mu(6) \le 5$.

We must see that if a 6-pole does not have the state $(a,a,a,a,a,a)$ then it cannot have less than 5 states. First assume that the 6-pole has at least one state with two colors, say $S_1=(a,a,a,a,b,b)$. Starting a Kempe interchange of type $a$-$b$ at the last semiedge we obtain one of the states $(b,a,a,a,b,a)$, $(a,b,a,a,b,a)$, $(a,a,b,a,b,a)$ or $(a,a,a,b,b,a)$, none of them equivalent to $S$. Assume it is $S_2=(a,a,a,b,b,a)$. Using arbitrary Kempe interchanges of type $a$-$c$ we can obtain two new three-colored non-equivalent states, one from $S_1$ and one from $S_2$. This gives a total of four states $\{S_1,S_2,S_3,S_4\}$. But taking $S_1$ and starting a Kempe interchange of type $a$-$b$ at the first semiedge we obtain one of the states $(b,b,a,a,b,b)$, $(b,a,b,a,b,b)$, $(b,a,a,b,b,b)$, $(b,a,a,a,a,b)$ or $(b,a,a,a,b,a)$, the first three being equivalent to $(a,a,b,b,a,a)$, $(a,b,a,b,a,a)$ and $(a,b,b,a,a,a)$. None has an equivalent state in $\{S_1,S_2,S_3,S_4\}$. Therefore, a fifth non-equivalent state $S_5$ exists.

Now assume that the 6-pole has only three-colored states, and that one of these states is $S_1=(a,a,b,b,c,c)$. Using Kempe interchanges and reasoning as in the previous paragraphs, we may assume that the 6-pole has the states $S_2=(a,b,a,b,c,c)$, $S_3=(a,c,b,b,a,c)$ and $S_4=(a,a,b,c,b,c)$. All of them have repeated consecutive colors. But then a Kempe interchange of type $a$-$c$ starting at the first semiedge of $S_4$ gives either the state $(c,a,b,a,b,c)$ or $(c,a,b,c,b,a)$. None of them has repeated consecutive colors, and hence none of them has an equivalent state in $\{S_1,S_2,S_3,S_4\}$. Therefore, a fifth non-equivalent state $S_5$ exists. Summarizing, $\mu(6)=5$.
\end{proof}

The construction of color complete and color closed multipoles can be easily done by aggregation. In particular, the addition of the $X$-shaped 4-pole of Fig. \ref{color closed 4-pole}$(a)$ to an arbitrary \mbox{$m$-pole} by the junction of two pairs of semiedges is useful to increase its number of possible states (note the use of the $X$-shaped 4-pole in Section 3, Fig. \ref{color complete 5-6-pole}). In this way, we have obtained the color closed multipoles of Figs. \ref{color closed 4-pole}$(b)$ and \ref{color closed 5-6-pole}$(a),(b)$, with 4, 5 and 6 semiedges respectively, and 3, 8 and 27 states respectively. The $X$-shaped 4-pole is also color closed.

\begin{figure}[t]
\begin{center}
%\vskip-1cm
\includegraphics[width=12cm]{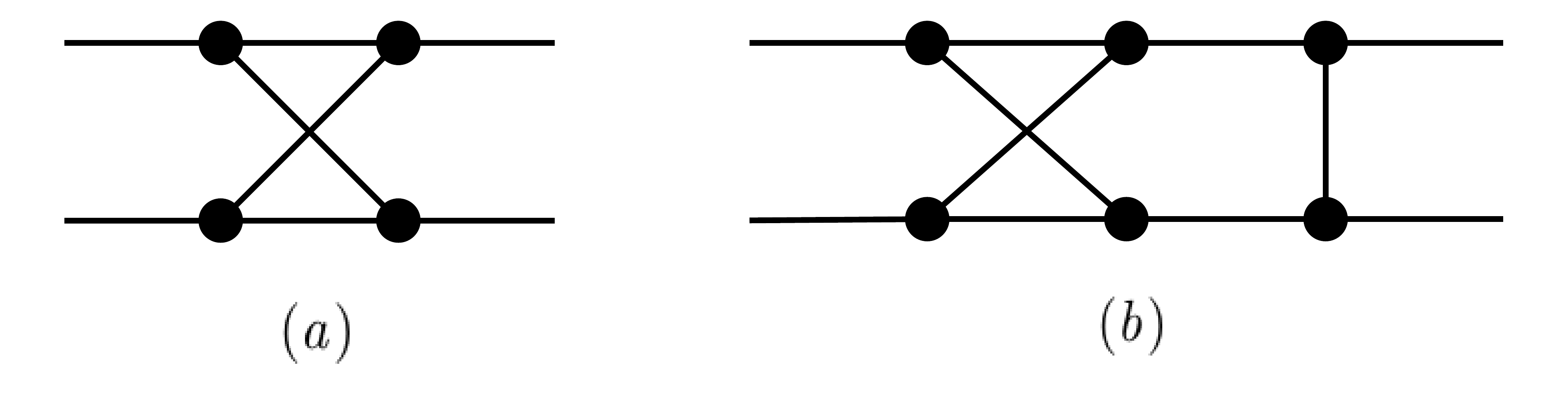}
\caption{The $X$-shaped 4-pole and another color closed 4-pole}
\label{color closed 4-pole}
\end{center}
\end{figure}

\begin{figure}[t]
\begin{center}
%\vskip-1cm
\includegraphics[width=15cm]{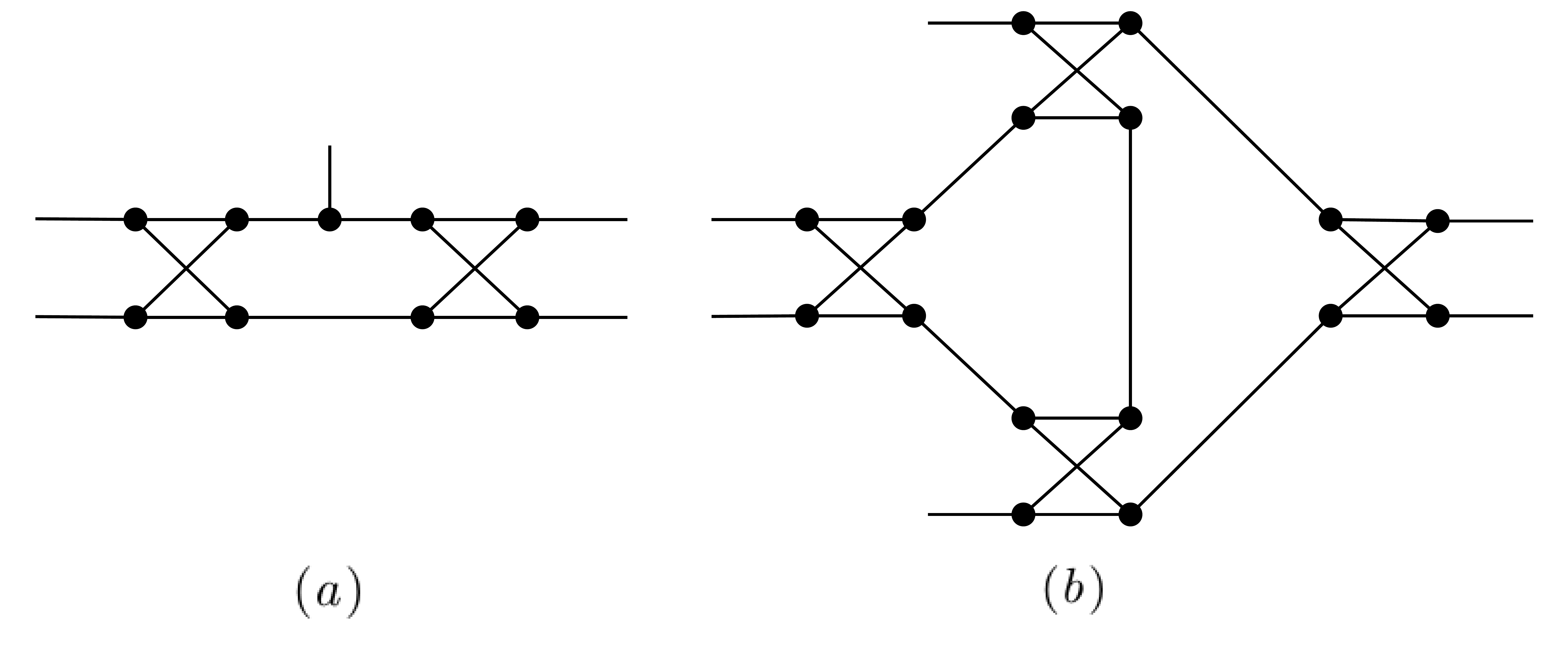}
\caption{Color closed 5- and 6-poles}
\label{color closed 5-6-pole}
\end{center}
\end{figure}

In fact, color closed multipoles need not have a number of states close to the maximum, because many state sets are not \textit{realizable} (that is, they are not realized by any multipole). In \cite{kmn13}, color closed 6-poles with only 15 and 16 states are described.

The \textit{minimal multipoles} are the $m$-poles with the minimum possible
number of vertices.
If $m$ is even, the minimal $m$-pole is constituted by $m/2$ free edges $\e$
and it has no vertices. If $m$ is odd, the minimal $m$-pole is formed by
the minimal 3-pole $\vv$, and $(m-3)/2$ free edges.
%We will prove
%later that the number of states of a minimal $m$-pole is equal to the minimum number of states of an $m$-pole formed by a tree and free edges.

\begin{lema}
\label{num-states-minmultipol}
Let $\rho(m)$ be the number of states of a minimal multipole. Then,
\begin{itemize}
\item[$(a)$]
If $m$ is odd, then $\rho(m)=3^{\frac{m-3}{2}}$,
\item[$(b)$]
If $m$ is even, then $\rho(m)=\textstyle\frac{1}{2}(3^{\frac{m}{2}-1}+1)$.
\end{itemize}
\end{lema}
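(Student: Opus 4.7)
Following the convention in Lemma~\ref{number-states-complete} (where the values $\sigma(2)=\sigma(3)=1$ force us to read ``number of states'' up to the natural action of $S_3$ on the three colors), I count states as $S_3$-orbits on raw semiedge colorings. The minimal $m$-pole is a disjoint union of $\lfloor m/2\rfloor$ free edges $\e$, together with a single copy of the trivial 3-pole $\vv$ when $m$ is odd; because these components are pairwise disconnected, a raw coloring is an independent color choice on each component. A free edge forces its two semiedges to share one color, contributing $3$ raw choices, while $\vv$ forces its three incident semiedges to take the three distinct colors, contributing $3!=6$.

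\emph{Odd case $m=2j+3$.} The total number of raw colorings is $6\cdot 3^j$. Because the copy of $\vv$ already uses all three colors, every raw coloring uses all three colors, so its $S_3$-stabilizer is trivial and its orbit has size $6$. Dividing gives $\rho(m)=3^j=3^{(m-3)/2}$.

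\emph{Even case $m=2k$.} There are $3^k$ raw colorings, one per tuple $(c_1,\ldots,c_k)\in\{1,2,3\}^k$ of free-edge colors. A constant tuple $(c,c,\ldots,c)$ is fixed by the transposition of the two unused colors (stabilizer of order $2$), so the three constant tuples together form a single orbit of size $3$; every non-constant tuple uses at least two colors, has trivial stabilizer, and sits in an orbit of size $6$. Counting the orbits gives
\begin{equation*}
\rho(m) \;=\; 1 + \frac{3^k-3}{6} \;=\; \frac{3^{k-1}+1}{2}.
\end{equation*}

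The only mildly subtle point is isolating the constant colorings in the even case: their non-trivial stabilizer is precisely what produces the extra $+1$ in the closed form. An alternative derivation via Burnside's lemma applied to the same $S_3$-action yields the identical answer and can serve as a cross-check.
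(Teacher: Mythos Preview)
Your argument is correct. In the odd case you and the paper do essentially the same thing: once the copy of $\vv$ pins down all three colors, there is no residual $S_3$-symmetry, so the $3^{(m-3)/2}$ choices on the free edges are exactly the states.

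In the even case your route genuinely differs from the paper's. The paper sets up the recurrence $\rho(m+2)=3\rho(m)-1$ (adding one more free edge triples the count, except that from the all-one-color state two of the three new choices collapse by a color swap) and solves it from $\rho(2)=1$. You instead count directly via orbit--stabilizer: the three constant tuples form a single orbit of size $3$, while any non-constant tuple has trivial stabilizer because a permutation fixing two colors in $\{1,2,3\}$ must fix all three, so the remaining $3^{k}-3$ raw colorings fall into orbits of size $6$. Your approach is shorter and more transparent about \emph{why} the $+1$ appears (it is exactly the contribution of the short orbit), and the Burnside cross-check you mention is a nice sanity test. The paper's recurrence, on the other hand, dovetails with the recursive style used elsewhere (e.g.\ for $\sigma(m)$ in Lemma~\ref{number-states-complete}) and makes the inductive growth visible. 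Either argument is perfectly acceptable here.
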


\begin{proof}
$(a)$ The minimal 3-pole $\vv$ has a unique coloring, and each one of the $(m-3)/2$ free edges has 3 possible colors.

$(b)$ We have the following recurrence relation: $\rho(m+2)=3\rho(m)-1$. To show this, consider $(m+2)/2$ free edges. If the first $m/2$ have the same color, say $a$, then two of the three possible color assignments for the last free edge, $b$ and $c$, yield equivalent states because the colors are interchangeable. \mbox{Otherwise}, all color assignments for the last free edge yield non-equivalent states. Therefore, only two of the $3\rho(m)$ possible states are equivalent to each other, and we must discard one of them. This gives the recurrence. The initial value is $\rho(2)=1$, corresponding to the unique state of a free edge. Using induction on $m$, or the same technique as in Section 3, we obtain the stated result.
\end{proof}

From Lemmas \ref{number-states-complete}, \ref{number-states-minimum}, and \ref{num-states-minmultipol}, we see that the first values of $\mu(m)$, $\rho(m)$, and $\sigma(m)$ are:
\begin{align*}
\mu(4)&=\rho(2)=2, & \sigma(4)&=4,\\
\mu(5)&=\rho(3)=3, & \sigma(5)&=10,\\
\mu(6)&=\rho(6)=5, & \sigma(6)& =31.
\end{align*}

From the results obtained, we conjecture that the minimum numbers of states are always 
attained by the minimal multipoles.

\section{Irreducibility}

For a given $m$-pole $M$, let $\Col(M)$ denote the set of states of $M$.
Then, $M$ is said to be \mbox{{\em reducible}} when there exists an $m$-pole $N$ such that $|V(N)|<|V(M)|$ and \mbox{$\Col(N)\subseteq \Col(M)$}. Otherwise, we say that $M$ is {\em irreducible}. Obviously, all minimal multipoles are irreducible.
%Also, $M$ is {\em singular}  if there is not an  $m$-pole $N$, non-isomorphic with $M$, with
%$|V(N)|=|V(M)|$ and $\Col(N)=\Col(M)$.

The following result was first proved in \cite{f91}:

\begin{propo}
Let $U$ be a snark. Then, for any integer $m\ge 1$ there exists a positive integer-valued function $v(m)$ such that any $m$-pole $M$ contained in $U$ with $|M|> v(m)$ is either not Tait colorable or reducible.
\end{propo}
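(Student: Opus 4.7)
The plan is to prove a slightly stronger statement than requested: a universal bound $v(m)$ can be defined which depends only on $m$, not on the snark $U$, and such that any Tait colorable $m$-pole with more than $v(m)$ vertices is reducible. The hypothesis that $U$ is a snark plays no role in the existence of $v(m)$; it merely furnishes the setting in which the conclusion is useful, namely that replacing a Tait colorable submultipole $M$ of $U$ by a smaller $N$ with $\Col(N)\subseteq \Col(M)$ still yields a cubic graph that cannot be $3$-edge colored.

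First I would invoke the Parity Lemma to observe that the set of admissible states of an $m$-pole is finite; by Lemma 3.1 it has exactly $\sigma(m)$ elements. Consequently the family
$$
\mathcal{R}(m)=\{\Col(N) : N \text{ is a Tait colorable } m\text{-pole}\}
$$
is a finite subfamily of the power set of the admissible states. For each $\mathcal{S}\in \mathcal{R}(m)$ define
$$
\nu(\mathcal{S})=\min\{|V(N)| : N \text{ is an } m\text{-pole with } \Col(N)=\mathcal{S}\},
$$
which is well defined because the minimum is taken over a nonempty set of nonnegative integers. I then set
$$
v(m)=\max\{\nu(\mathcal{S}) : \mathcal{S}\in \mathcal{R}(m)\},
$$
a maximum over finitely many finite integers, hence a finite positive integer (with the vacuous convention $v(1)=1$, since $\sigma(1)=0$).

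Next I would verify the stated property. Let $M$ be any $m$-pole with $|V(M)|>v(m)$. If $M$ is not Tait colorable there is nothing to prove, so assume it is; then $\mathcal{S}:=\Col(M)$ is a nonempty element of $\mathcal{R}(m)$, and by construction there is an $m$-pole $N$ with $\Col(N)=\mathcal{S}$ and $|V(N)|=\nu(\mathcal{S})\le v(m)<|V(M)|$. Since $\Col(N)=\mathcal{S}\subseteq\Col(M)$ is a nonempty inclusion and $|V(N)|<|V(M)|$, the multipole $M$ is reducible.

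The only subtle point, modest as it is, lies in justifying the finiteness of $\mathcal{R}(m)$, which is precisely what the Parity Lemma secures by confining every state to a universe of $\sigma(m)$ admissible sequences. The rest is a compactness-style max--min argument. I would close by remarking that this $v(m)$ coincides with the quantity introduced in the abstract as the maximum number of vertices of an irreducible $m$-pole, because every Tait colorable irreducible $m$-pole must realize $\nu(\Col(M))$ for its own state set, and conversely every such minimizer is irreducible.
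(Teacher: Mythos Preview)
Your argument is correct and is essentially the standard pigeonhole/compactness proof of this result. Note, however, that the paper itself does not supply a proof of this proposition: it simply records the statement and attributes it to \cite{f91}, so there is no in-paper argument to compare against. Your observation that the hypothesis ``$U$ is a snark'' plays no role in the existence of $v(m)$ is also correct and worth making explicit.

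One quibble concerns your closing remark, where you claim that your bound coincides with the quantity the paper defines as the maximum order of an irreducible $m$-pole, on the grounds that ``conversely every such minimizer is irreducible''. This converse is not justified: an $m$-pole $N$ with $|V(N)|=\nu(\Col(N))$ may still be reducible to some $N'$ whose state set is a \emph{proper} nonempty subset of $\Col(N)$, and nothing in your construction rules this out. What your argument does establish is that every irreducible $M$ satisfies $|V(M)|=\nu(\Col(M))\le \max_{\mathcal{S}}\nu(\mathcal{S})$, so your function is an upper bound for the paper's $v(m)$; that already suffices for the proposition as stated. Whether the two quantities actually agree is a separate question that your argument does not settle, but this does not affect the validity of your proof of the proposition.
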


A reformulation of the same result was given later by Nedela and \v{S}koviera \cite[Th. 3.1]{ns96}. In the same paper, the authors also gave credit to \cite{f91} for the introduction of the function $v(m)$, and rewrote it as $\kappa(k)$.

The known values of $v(m)$ are $v(2)=0$, $v(3)=1$ (both are trivial results), $v(4)=2$ (Goldberg \cite{go81}), and $v(5)=5$ (Cameron, Chetwynd, and Watkins \cite{ccw87}). For $m=6$, Karab\'a\v{s}, M\'ac\v{a}jov\'a, and  Nedela \cite{kmn13} proved that $v(6)\ge 12$, although they implicitly attributed the definition of $v(m)$ and the above result to Nedela and \v{S}koviera \cite{ns96}. 

The study of the function $v(m)$ has relevant importance in the study of the structure of  snarks. More precisely, every value of $v(m)$ has a corresponding result for the decomposition of snarks. Moreover, according to the Jaeger-Swart Conjecture \cite{jsw80}, every snark contains a cycle-separating edge-cut of size at most six. As commented in \cite{kmn13}, if this were the case, then $v(6)$ would be the most interesting unknown value of $v(m)$.

The following basic lemma is simple but very useful in our study:

\begin{lema}
Let $M$ be an irreducible multipole. Then, every submultipole $N\subset M$ is also irreducible.
\end{lema}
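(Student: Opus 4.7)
The plan is to argue by contrapositive: assume some submultipole $N\subset M$ is reducible, witnessed by an $m$-pole $N'$ with $|V(N')|<|V(N)|$ and $\Col(N')\subseteq\Col(N)$, and then build an $m$-pole $M'$ that reduces $M$. To do this, I would perform ``surgery'' on $M$, cutting out $N$ and gluing $N'$ in its place. Specifically, because $N$ is a submultipole of $M$, we can view $M$ as the result of joining $N$ to a complementary multipole $L$ along $k$ pairs of cut semiedges $(\zeta_i,\eta_i)$, $i=1,\ldots,k$. Using the identification of the semiedges of $N'$ with those of $N$ which is implicit in the hypothesis $\Col(N')\subseteq\Col(N)$, let $\zeta'_1,\ldots,\zeta'_k$ be the semiedges of $N'$ corresponding to $\zeta_1,\ldots,\zeta_k$, and define $M'=N'\cup L$ by joining $\zeta'_i$ to $\eta_i$ for each $i$. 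The external semiedges of $M'$ then coincide (as an ordered list) with those of $M$, and $|V(M')|=|V(N')|+|V(L)|<|V(N)|+|V(L)|=|V(M)|$.

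Next I would verify that $\Col(M')\subseteq\Col(M)$. Pick any $T\in\Col(M')$ and a Tait coloring $\phi'$ of $M'$ realizing $T$. Its restriction to $N'$ produces a state $S\in\Col(N')$, and the hypothesis gives $S\in\Col(N)$; choose any Tait coloring $\psi_N$ of $N$ realizing $S$. Since $\psi_N$ and $\phi'|_L$ assign the same colors to the matched cut semiedges $\zeta_i$ and $\eta_i$ (both agreeing with $S$ on the corresponding coordinates), the union $\psi_N\cup\phi'|_L$ is a consistent Tait coloring of $M$, and its state on the external semiedges is still $T$. Hence $T\in\Col(M)$, so $M'$ reduces $M$, contradicting the irreducibility of $M$.

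The main bookkeeping concern, which I see as the only delicate point, is to make sure the same ordering is used to identify the semiedges of $N'$ with those of $N$ (to apply $\Col(N')\subseteq\Col(N)$) and to pair the cut semiedges of $N'$ with those of $L$ when constructing $M'$. Once this identification is fixed consistently, the cut-and-paste surgery and the gluing of the partial colorings are both routine, and no further obstacle arises.
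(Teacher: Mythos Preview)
Your proposal is correct and follows essentially the same approach as the paper: assume a submultipole $N$ is reducible to some $N'$, replace $N$ by $N'$ inside $M$ to obtain $M'$, and conclude that $M'$ reduces $M$. The paper's proof is a one-sentence sketch of exactly this cut-and-paste argument, whereas you have carefully spelled out the decomposition $M=N\cup L$, the vertex count, and the verification that $\Col(M')\subseteq\Col(M)$ via gluing of partial colorings; the extra care you take with the consistent ordering of semiedges is appropriate and does not diverge from the paper's intent.
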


\begin{proof}
We proceed by contradiction. If a submultipole $N$ of $M$ is reducible to, say, a multipole $N'$, then $M$ is reducible to a multipole $M'$ obtained replacing $N$ by $N'$. This contradicts the fact that $M$ is irreducible.
\end{proof}

As an example of application, note that, as every $m$-pole with $n\ge 1$ vertices
contains an $(m-1)$-pole with $n-1$ vertices (just delete a semiedge $\epsilon=(v)$ and its incident vertex $v$ to create two new semiedges $\epsilon'$ and $\epsilon''$), we have that $v(m)\ge v(m-1)-1$. Hence, from the results in \cite{kmn13}, we know that $v(7)\ge 11$.

Note that $v(m)$ must have the same parity as $m$. We can be more precise with respect to the behavior of $v(m)$ if we use the following result.

\begin{lema}
Let $M$ be an irreducible multipole. Then, the disjoint union of $M$ and one free edge $\e$ is also irreducible.
\end{lema}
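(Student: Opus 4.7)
The plan is to argue by contradiction: I would assume that $M':=M\cup\e$ is reducible and then extract from the witness a reduction of $M$ itself. Label the two semiedges of the appended free edge as $\epsilon_{m+1}$ and $\epsilon_{m+2}$. Since a Tait coloring of $M'$ is simply a Tait coloring of $M$ together with an independent color $a\in\{1,2,3\}$ for the free edge, one has
$$
\Col(M')=\{(s_1,\ldots,s_m,a,a):(s_1,\ldots,s_m)\in\Col(M),\ a\in\{1,2,3\}\},
$$
so every state of $M'$ satisfies $s_{m+1}=s_{m+2}$.

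Next, I would take an $(m+2)$-pole $N$ witnessing the reduction, with $|V(N)|<|V(M)|$ and non-empty $\Col(N)\subseteq\Col(M')$. Every Tait coloring of $N$ is then forced to satisfy $\phi(\epsilon_{m+1})=\phi(\epsilon_{m+2})$. I would form an $m$-pole $N^*$ by \emph{joining} these two semiedges: if both dangle from vertices $u,v$ we create the edge $(u,v)$; if they already form a free edge together we simply delete it; and if either is paired with some other semiedge $\epsilon_j$ through a free edge, we splice the resulting pieces into a single structure ending at the corresponding vertex or remaining semiedge. In every case $N^*$ has exactly $m$ semiedges and the same vertex set as $N$.

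The crux is that $\Col(N^*)$ coincides with the projection of $\Col(N)$ onto its first $m$ coordinates: any Tait coloring of $N^*$ extends to one of $N$ by reinstating $\epsilon_{m+1}$ and $\epsilon_{m+2}$ with the color dictated by the joined structure, and the reverse restriction works because $\phi(\epsilon_{m+1})=\phi(\epsilon_{m+2})$ holds automatically in $N$. Hence $\Col(N^*)$ is a non-empty subset of $\Col(M)$, while $|V(N^*)|=|V(N)|<|V(M)|$, contradicting the irreducibility of $M$.

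The main obstacle is to give a clean, case-by-case verification of the join construction and of the state-projection identity within the paper's formalism for semiedges; the wording becomes mildly awkward when $\epsilon_{m+1}$ or $\epsilon_{m+2}$ is already part of a free edge, since the ``join'' is then really a splice. One delicate subcase is when $\epsilon_{m+1}$ and $\epsilon_{m+2}$ are incident to a common vertex of $N$, which would produce a loop after joining; but then any Tait coloring would have to assign them distinct colors, contradicting $\phi(\epsilon_{m+1})=\phi(\epsilon_{m+2})$, so $\Col(N)=\emptyset$ and this case does not actually witness a reduction.
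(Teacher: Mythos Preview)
Your proof is correct and follows essentially the same route as the paper's: assume a reduction $N$ of $M\cup\e$ exists, observe that the two distinguished semiedges of $N$ always receive the same color, join them to obtain an $m$-pole $N^*$ with $|V(N^*)|<|V(M)|$ and $\Col(N^*)\subseteq\Col(M)$, and derive a contradiction. You are in fact a bit more explicit than the paper in handling the peripheral subcases (splicing through free edges and the common-vertex/loop situation), but the underlying argument is identical.
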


\begin{proof}
We proceed by contradiction. Assume that the disjoint union of $M$ and $\e$ is \mbox{reducible} to a submultipole $M'$. We denote the semiedges of $\e$ by $\epsilon_0$ and $\epsilon_1$, and their corresponding semiedges in $M'$ by $\epsilon'_0$ and $\epsilon'_1$. In the set of states of $M\cup\e$ (respectively $M'$), all states must have $\phi(\epsilon_0)=\phi(\epsilon_1)$ (respectively $\phi(\epsilon'_0)=\phi(\epsilon'_1)$). This implies that $\epsilon'_0$ and $\epsilon'_1$ are not adjacent (that is, $\dist(\ep'_0,\ep'_1)>0$) unless they constitute a free edge $\e'$ also in the submultipole $M'$, but then $M$ would be trivially reducible to $M'$ minus $\e'$ and we would be done. Otherwise, we may join $\epsilon_0$ to $\epsilon_1$ in $M\cup\e$ and $\epsilon'_0$ to $\epsilon'_1$ in $M'$. In the first case, we obtain $M$ and a trivial `closed edge' which can be ignored. In the second case, we obtain a multipole $M''$ whose number of vertices is $|V(M'')|=|V(M')|<|V(M+\e)|=|V(M)|$ and whose set of states is a subset of the set of states of $M$. By definition, $M$ would be reducible to $M''$, contradicting our assumption.
\end{proof}

As a consequence, we have that $v(m) \ge v(m-2)$. Therefore, the function $v(m)$ is, at least, partially monotone in the sense that its even (respectively odd) values behave monotonically.

\section{Trees and forests}

The number of states of a tree $m$-pole $T_m$, denoted by $t(m)=|\Col(T_m)|$, is easy to compute. Tree multipoles can be constructed recursively by the addition of vertices. A 3-pole with a single vertex has only one possible state. Then, $t(3)=|\Col(T_3)|=1$. Every new vertex increases $m$ one unit and doubles the number of possible states. Then, $t(m)=2t(m-1)$. As a consequence, $t(m)=2^{m-3}$.

An $m$-pole $M$ with $c$ components is called {\em separable} if there exists some  $m$-pole $N$, with at least $c+1$ components,
$|V(N)|=|V(M)|$, and $\Col(N)\subseteq  \Col(M)$.

Now, we have the following result.

\begin{propo}
Every tree multipole $T_m$ is non-separable and irreducible.
\end{propo}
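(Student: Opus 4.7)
The plan is to proceed by induction on $m$. The base cases $m=2$ and $m=3$ are immediate: $T_2=\e$ has no vertices, and $T_3=\vv$ has only one vertex, while any smaller $3$-pole (necessarily with zero vertices) fails to be Tait-colorable by the Parity Lemma. For the inductive step I assume $T_{m-1}$ is irreducible and non-separable. The key observation is that $T_m$ can be obtained from $T_{m-1}$ by replacing some semiedge $\epsilon=(u)$ with an edge $uv$, where $v$ is a new vertex carrying two new semiedges $\epsilon_a,\epsilon_b$. In every state of $T_m$ we necessarily have $s_a\neq s_b$, and the color of edge $uv$ is forced to be the third color; this yields a natural bijection $\rho\colon\Col(T_m)\to\Col(T_{m-1})$ that replaces the pair $(s_a,s_b)$ by the third color at the position of $\epsilon$.

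Suppose for contradiction that $T_m$ is reducible or separable to an $m$-pole $N$ with $\Col(N)\subseteq\Col(T_m)$. In either scenario, Lemma~\ref{lema-forests} forces $N$ to have at least two components. Since $\Col(N)\subseteq\Col(T_m)$, every state of $N$ must satisfy $s_a\neq s_b$. I split into three cases based on how $\epsilon_a$ and $\epsilon_b$ sit in $N$. \emph{(i)} If both are incident to a common vertex $v'$ of $N$, I contract $v'$ exactly as in the $T_{m-1}\to T_m$ construction, obtaining an $(m-1)$-pole $N'$ with $|V(N')|=|V(N)|-1$ and $\Col(N')=\rho(\Col(N))\subseteq\Col(T_{m-1})$, contradicting the inductive hypothesis. \emph{(ii)} If $\epsilon_a,\epsilon_b$ form a free edge of $N$, then $s_a=s_b$ in every state of $N$, a direct contradiction. \emph{(iii)} If they lie at distinct vertices of $N$ without forming a free edge, I aim to exhibit a state of $N$ with $\phi(\epsilon_a)=\phi(\epsilon_b)$: when the two semiedges lie in different components of $N$ this follows from an independent color permutation of one of those components, and when they lie in the same component it follows from an appropriate Kempe chain swap.

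The main obstacle will be completing case \emph{(iii)} when $\epsilon_a,\epsilon_b$ belong to the same component of $N$ and every two-color Kempe chain containing one also contains the other (as may happen, for instance, in cycle-like sub-multipoles that rigidly force the two semiedges to distinct colors). To handle this, my plan is to use the flexibility of choosing a different leaf of $T_m$ for the inductive construction (for $m\ge 4$ the tree $T_m$ has at least two leaves): if some leaf lands in case \emph{(i)} or \emph{(ii)} we are done, and if every leaf falls into the rigid subcase of \emph{(iii)}, the Parity Lemma applied to the sub-multipole of $N$ assembled from all the offending leaf-semiedges yields a parity constraint incompatible with the parity constraints that $T_m$'s internal edge cuts impose on $\Col(T_m)$, producing the desired contradiction. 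Non-separability follows from exactly the same case analysis, since it also begins with the hypothesis $c(N)\ge 2$ guaranteed by Lemma~\ref{lema-forests}.
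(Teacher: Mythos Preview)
Your inductive setup mirrors the paper's: both peel off a leaf $v$ of $T_m$, use that its two semiedges $\epsilon_a,\epsilon_b$ are always colored differently, and aim to push the hypothetical reduction/separation $N$ down to a witness against $T_{m-1}$. The place where your argument breaks is the same-component sub-case of~(iii). A Kempe swap need not separate $\epsilon_a$ from $\epsilon_b$: already in the cycle $3$-pole $C_3$ the three semiedges sit at three distinct vertices yet are pairwise distinct in \emph{every} Tait coloring, and such a component can perfectly well occur inside $N$. Your fallback---running over all leaves of $T_m$ and invoking an unspecified parity obstruction---is not an argument as stated; there is no evident edge-cut of $T_m$ whose parity constraint conflicts with the hypothesis that every leaf pair lands in a ``rigid'' component of $N$, and I do not see how to make one.

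The paper sidesteps this difficulty with a single uniform move. Once you know (as you correctly argue) that $\epsilon_a,\epsilon_b$ cannot lie in different components of $N$, do \emph{not} try to equalize their colors. Instead, glue a fresh vertex $\vv$ to both of them, in $N$ and simultaneously in $T_m$. On the $T_m$ side this creates a digon at the leaf $v$, which is color-equivalent to deleting $v$ and recovering $T_{m-1}$; on the $N$ side you obtain an $(m-1)$-pole $N'$ with $|V(N')|=|V(N)|+1$ and---because $\epsilon_a,\epsilon_b$ were in the same component---still $c(N')=c(N)\ge 2$. Now $\Col(N')\subseteq\Col(T_{m-1})$ and $|V(N')|\le (m-4)+1=m-3=|V(T_{m-1})|$, so $T_{m-1}$ is reducible (strict inequality) or separable (equality, using $c(N')\ge 2$), contradicting the inductive hypothesis. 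This one step replaces your entire case split: it absorbs your case~(i) as the special situation where the glued $\vv$ produces a digon in $N'$ as well, and it renders case~(iii) irrelevant. For non-separability the paper iterates the same gluing across successive leaves; your version would need the same adjustment, since a single step only yields $|V(N')|=m-1>|V(T_{m-1})|$ when $|V(N)|=m-2$.
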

\begin{proof}
The proof is by induction. The result is trivial for $m=2,3$ since $T_2=\e$ and $T_3=\vv$.
Then, assume that the results holds for some $T_m$ with $m>3$. For some semiedge $(u)$, consider the $(m+1)$-pole $M$ obtained by joining one semiedge of $\vv$ to $u$, so `turning' $(u)$ into two new semiedges $\epsilon_1=(v)_1$ and $\epsilon_2=(v)_2$. We first prove that $M$ is irreducible. On the contrary, there would exist an $(m+1)$-pole $M'$ with $|V(M')|< |V(M)|=m-1$ (and hence $|V(M')|\le m-3$) and $\Col(M')\subseteq  \Col(M)$.
Thus, $M'$ must have at least two connected components, say, $M'_1$ and $M'_2$. Moreover, since $\epsilon_1$ and $\epsilon_2$ must have different colors in any Tait coloring of
$M$, the same is true for the corresponding semiedges $\epsilon'_1$ and $\epsilon'_2$ in $M'$. Consequently, such semiedges cannot be in different components of $M'$. Without loss of generality, assume that $\epsilon'_1,\epsilon'_2\in {\cal E}(M'_1)$. Now by joining $\epsilon'_1$ and $\epsilon'_2$ to two semiedges of $\vv$, and doing the same operation to $\epsilon_1$ and $\epsilon_2$, so creating a digon, we respectively obtain the multipoles $N$ and $N'$ with the following properties:
\begin{itemize}
\item $\Col(N') \subseteq \Col(N)=\Col(T_m)$ (as the created digon 2-pole in $N$ is color equivalent to $\e$).
\item $|V(N')|\le m-2=|V(T_m)|$.
\end{itemize}
Thus, if $|V(N')|<m-2$, $T_m$ would be reducible, and if $|V(N')|=m-2$, $T_m$ would be separable.
In any case, we get a contradiction.

To prove that $M$ is also non-separable we again assume the contrary and iterate the above procedure until $M$ is proved to be color equivalent to some small tree multipole $T$, whereas $M'$ is color equivalent to a multipole $N$ constituted by some isolated edges. But this again contradicts the fact that $\Col(M')\subseteq  \Col(M)$.
\end{proof}

As every tree $m$-pole $T_m$ has $m-2$ vertices, the above proposition allows us to state that $v(m)\ge m-2$ for $m \ge 2$.

Using basically the same proof as above, we have the following:

\begin{propo}
\label{forest-irreducible}
Every forest multipole is non-separable and irreducible.
\end{propo}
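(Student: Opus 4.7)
The plan is to imitate the proof of Proposition~6.4 essentially verbatim, inducting on the number of semiedges $m$ of the forest $F$. The base cases $m=2,3$ (a single free edge $\e$ and a single $\vv$) are trivial. The inductive step splits into two construction types reflecting how a forest can grow: either (i) $F$ is built from a forest $F^-$ with $m-1$ semiedges by attaching a new vertex $\vv$ at some existing semiedge $(u)$, extending one tree component of $F^-$ by a leaf; or (ii) $F$ arises from a smaller forest $F^-$ by adjoining a disjoint $\e$ or $\vv$ as a new component.

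For construction (i), the proof of Proposition~6.4 carries over word for word, since its only inputs are the inductive irreducibility and non-separability of $F^-$, the forced difference of the two new semiedges at the added vertex, and Lemma~2.1 (which guarantees that any hypothetical reducer or separator $M'$ of $F$ has at least $c(F^-)+1$ components and that the two corresponding semiedges of $M'$ lie in a common component). The digon trick then produces from $M'$ a multipole $N'$ with $\Col(N')\subseteq\Col(F^-)$ and either fewer vertices than $F^-$ (contradicting its irreducibility) or the same vertex count but strictly more components (contradicting its non-separability). None of this uses that $F^-$ is connected.

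For construction (ii), the $\e$-case $F=F^-\cup\e$ is handled by Lemma~5.3 for irreducibility and by a parallel argument for non-separability: in any hypothetical separator $M'$, the two always-equal-colored semiedges corresponding to $\e$ either form a free edge in $M'$ (so removing it gives a separation of $F^-$) or lie in a common component and can be joined directly, again yielding a separation of $F^-$. The $\vv$-case $F=F^-\cup\vv$ I plan to reduce to the $\e$-case by a digon gadget: attaching an auxiliary $\vv$-vertex $w$ joining two of the three semiedges of the standalone $\vv$ converts that component into one color-equivalent to $\e$, so the resulting multipole is color-equivalent to $F^-\cup\e$. The same operation applied to any hypothetical reducer or separator $M'$ produces $N'$ with $\Col(N')\subseteq\Col(F^-\cup\e)$; a parity comparison of $|V|$ and $m$ forces $|V(N')|\leq|V(F^-\cup\e)|$, and the component bound from Lemma~2.1 yields either strict inequality (contradicting the irreducibility of $F^-\cup\e$, just proved) or equality with strictly more components (contradicting its non-separability).

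The main obstacle, I expect, is the $\vv$-case of construction (ii): the Proposition~6.4 construction does not literally apply when the $\vv$ is a free-standing component, so one must first convert it to a free edge via the digon gadget before invoking the $\e$-case. Careful bookkeeping of parities and component counts via Lemma~2.1 is required throughout, but the underlying ideas are exactly those of Proposition~6.4 and the non-separability analogue of Lemma~5.3.
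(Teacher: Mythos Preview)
Your overall approach matches the paper's: its entire proof of this proposition is the sentence ``Using basically the same proof as above,'' so replaying the tree argument (the paper's Proposition~6.1, which you label~6.4) via induction on $m$ with a split into leaf-extension and new-component steps is exactly what is intended, and your use of Lemma~5.3 for the $\e$-case is natural.

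There is, however, a concrete gap in the non-separability part of your argument. The digon gadget \emph{adds} a vertex, so when $M'$ is a separator rather than a reducer you have $|V(M')|=|V(F)|$ and hence $|V(N')|=|V(M')|+1=|V(F)|+1$. In construction~(i) this exceeds $|V(F^-)|=|V(F)|-1$; in your $\vv$-case it exceeds $|V(F^-\cup\e)|=|V(F^-)|=|V(F)|-1$. No parity comparison gives the inequality $|V(N')|\le|V(F^-\cup\e)|$ you assert --- it is simply false for separators --- so neither ``fewer vertices'' nor ``same vertex count with more components'' is available, and you reach no contradiction. The paper's tree proof does not obtain non-separability from a single digon step either; it appeals to a (tersely worded) \emph{iteration}, peeling leaves repeatedly until the tree shrinks to something trivial while the alleged separator is forced into a configuration of isolated edges whose states visibly cannot embed. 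For construction~(i) you can defer to that iteration. For the $\vv$-case the cleanest repair is to observe that this step is only needed when every component of $F$ is an isolated $\vv$ (otherwise peel a leaf or strip an $\e$ first); then any separator $M'$ has $|V(M')|=c(F)<c(M')$, so by pigeonhole some component of $M'$ is a free edge, and stripping it (joining the corresponding two semiedges in $F$, which necessarily lie in distinct $\vv$'s) yields a separator of a strictly smaller forest, contradicting the inductive hypothesis.
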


The following result shows that the number of states of a forest $m$-pole depends only on $m$ and its number $n$ of vertices.

\begin{lema}
Let $f(n,m)$ denote the number of states of a forest $m$-pole $M$ with $n \leq m-2$. 
%By Lemma \ref{lema-forests} and Proposition \ref{forest-irreducible} above, 
%$M$ is non-separable and irreducible.
%, 
Then, if $n \leq m-4$, we have the following recurrence relation:
$$
f(n,m)=f(n,m-2)+f(n+1,m-1).
$$
\end{lema}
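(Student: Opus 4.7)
The plan is to derive a closed-form expression for $f(n,m)$ and then verify the recurrence by direct substitution. First, I would show that $f(n,m)$ depends only on $n$ and $m$ by counting ordered Tait colorings explicitly. By induction on the number of vertices, each tree component with $k$ vertices admits exactly $3\cdot 2^{k}$ ordered Tait colorings: the base case $k=0$ is a free edge with $3$ colorings, and attaching a new leaf to an already-colored tree leaves $2$ free choices for the two new semiedges of the leaf. Since $M$ has $c=(m-n)/2$ tree components of sizes $n_{1},\ldots,n_{c}$, the total count is
$$\prod_{i=1}^{c}3\cdot 2^{n_{i}}=3^{c}\cdot 2^{n},$$
which depends only on $n$ and $m$.

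Next, I would pass from ordered colorings to states (equivalence classes under the natural $S_{3}$ action permuting colors). For $n\geq 1$, no ordered Tait coloring is fixed by a non-identity $\sigma\in S_{3}$: any vertex carries three distinct colors, and these would each have to be fixed by $\sigma$, which is impossible. Hence every $S_{3}$-orbit has size $6$ and
$$f(n,m)=\frac{3^{c}\cdot 2^{n}}{6}=3^{c-1}\cdot 2^{n-1}.$$
When $n=0$ the forest is a disjoint union of $c=m/2$ free edges; the three monochromatic colorings each fix one transposition and together form a single orbit of size $3$, while the remaining $3^{c}-3$ colorings form orbits of size $6$, so
$$f(0,m)=1+\frac{3^{c}-3}{6}=\frac{3^{c}+3}{6}.$$

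Finally, I would verify the recurrence algebraically. The hypothesis $n\leq m-4$ forces $c\geq 2$, and both $f(n,m-2)$ and $f(n+1,m-1)$ correspond to forests with $c-1$ components. For $n\geq 1$,
$$f(n,m-2)+f(n+1,m-1)=3^{c-2}\cdot 2^{n-1}+3^{c-2}\cdot 2^{n}=3^{c-1}\cdot 2^{n-1}=f(n,m),$$
and for $n=0$,
$$f(0,m-2)+f(1,m-1)=\frac{3^{c-1}+3}{6}+3^{c-2}=\frac{3^{c-1}+3+2\cdot 3^{c-1}}{6}=\frac{3^{c}+3}{6}=f(0,m).$$
The main obstacle, and really the only delicate point, is the $n=0$ case: since a forest of free edges admits monochromatic colorings that are fixed by transpositions, the state count has a slightly different form and the recurrence must be checked separately from the generic $n\geq 1$ case, although the algebra remains elementary.
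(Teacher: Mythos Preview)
Your argument is correct, but it takes a genuinely different route from the paper's. The paper proves the recurrence bijectively: since $n\le m-4$ forces at least two components, one picks semiedges $\epsilon_1,\epsilon_2$ in distinct components and partitions the states of $M$ according to whether $\phi(\epsilon_1)=\phi(\epsilon_2)$. When the colors agree, joining $\epsilon_1$ to $\epsilon_2$ produces a forest $(m-2)$-pole on $n$ vertices; when they differ, attaching both to a fresh $3$-pole $\vv$ produces a forest $(m-1)$-pole on $n+1$ vertices. These two operations set up bijections with the state sets counted by $f(n,m-2)$ and $f(n+1,m-1)$, giving the recurrence directly and, by induction on the number of components, the well-definedness of $f(n,m)$ as a byproduct.

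Your approach reverses the logic: you first establish well-definedness by computing the number of ordered Tait colorings as $3^{c}2^{n}$ and then passing to $S_{3}$-orbits, obtaining the closed forms $f(n,m)=3^{c-1}2^{n-1}$ for $n\ge 1$ and $f(0,m)=(3^{c}+3)/6$; the recurrence is then a one-line algebraic check. What you gain is that the explicit formula (which the paper derives only afterwards, in the subsequent lemmas) falls out immediately, and the well-definedness is transparent rather than inductive. What the paper's argument buys is a structural explanation of \emph{why} the recurrence holds, independent of any formula, and it avoids the orbit-counting case split at $n=0$ that you correctly flag as the one delicate point in your version.
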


\begin{proof}
First notice that, if $n \leq m-4$, a forest $m$-pole with $n$ vertices has at least two components. 
Then, %, as in Lemma \ref{number-states-complete},
given two semiedges $\epsilon_1$ and $\epsilon_2$ (not in the same component), its set of states can be partitioned into two subsets: Those with $\phi(\epsilon_1)=\phi(\epsilon_2)$, and those with $\phi(\epsilon_1) \neq \phi(\epsilon_2)$, for some Tait coloring $\phi$ of $M$.

Case $\phi(\epsilon_1)=\phi(\epsilon_2)$: The two semiedges can be joined and we obtain an $m'$-pole $M'$ with $m'=m-2$ semiedges and $n'=n$ vertices. As a consequence of $n \leq m-4$, the relation $n' \leq m'-2$ holds. Therefore, its number of states is $f(n',m')=f(n,m-2)$. %To see that it is minimum, assume the contrary. Then there is an $(m-2)$-pole $N'$ with the same number of vertices as $M'$ but a smaller number of states. But then, the disjoint union of $N'$ and one free edge would be an $m$-pole with two equally colored semiedges, the same number of vertices as $M$ but having a smaller number of states than $M$ has, contradicting our assumption.

Case $\phi(\epsilon_1) \neq \phi(\epsilon_2)$: The two semiedges can be joined to two semiedges $\epsilon''_1$ and $\epsilon''_2$ of a minimal 3-pole $\vv$. Let $\epsilon''_3$ denote the remaining semiedge of $\vv$. The result of the junction is a new $m''$-pole $M''$ with $m''=m-1$ semiedges, now including $\epsilon''_3$, and $n''=n+1$ vertices. Again, the relation $n'' \leq m''-2$ holds as a consequence of $n \leq m-4$. Therefore, the number of states of $M''$ is $f(n'',m'')=f(n+1,m-1)$. %To see that it is minimum, assume the contrary. Then there is an $m''$-pole $N''$ with the same number of vertices as $M''$ but a smaller number of states. If it has at least one vertex, then we can choose a semiedge incident to a vertex $u$. Delete $u$ and cut its two remaining, differently colored, incident edges or semiedges (cutting a semiedge gives a free edge) to obtain an $m$-pole with the same number of vertices as $M$ but having a smaller number of states than $M$ has, contradicting our assumption. If $N''$ has no vertices, then it is the disjoint union of $\frac{m-1}{2}$ free edges. But since $N''$ and $M''$ have the same number of vertices, $M''$ is also the disjoint union of $\frac{m-1}{2}$ free edges. Therefore $N''$ and $M''$ are equal, contradicting our assumption that $N''$ had a smaller number of states than $M''$.

\noindent Finally, adding up all states we obtain the recurrence relation.
\end{proof}

By means of a straightforward calculation, it can be proved that, if $n \leq m - 2(k+1)$, then the following formula holds:
\begin{equation}
\label{recurrence-f}
f(n,m)=\sum_{i=0}^k f(n+i,m-2k+i) {k \choose i}.
\end{equation}

\begin{lema}
\label{number-states-minimal}
\begin{itemize}
\item[$(a)$]
 The number of states of a $(2k+2)$-pole with no vertices is
$$
f(0,2k+2)=\frac{1}{2}(3^k+1).
$$
\item[$(b)$] The number of states of a $(2k+3)$-pole with one vertex is
$$
f(1,2k+3)=3^k.
$$
\end{itemize}
\end{lema}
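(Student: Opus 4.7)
The plan is to prove part $(b)$ directly by a short counting argument and then to deduce part $(a)$ from the recurrence of the preceding lemma together with part $(b)$.

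First, for part $(b)$, I would observe that a $(2k+3)$-pole with one vertex is the disjoint union of the minimal 3-pole $\vv$ and $k$ free edges. The Parity Lemma forces the three semiedges of $\vv$ to carry the three distinct colors, so $\vv$ admits exactly one state. Because this state already uses all three colors, fixing a representative Tait coloring of $\vv$ exhausts the color-permutation symmetry---no nontrivial element of $S_3$ stabilizes it---and each of the $k$ free edges may then be independently assigned any of the three colors. This yields $3^k$ pairwise inequivalent states.

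Next, for part $(a)$, the multipole in question is the disjoint union of $k+1$ free edges. For $k \geq 1$ we have $0 \leq (2k+2) - 4$, so the single-step recurrence $f(n,m) = f(n, m-2) + f(n+1, m-1)$ from the preceding lemma applies with $n=0$ and $m=2k+2$, giving
\[
f(0, 2k+2) = f(0, 2k) + f(1, 2k+1).
\]
Substituting $f(1, 2k+1) = 3^{k-1}$ from part $(b)$ and iterating down to the base case $f(0,2) = 1$ (a single free edge admits one state) produces the telescoping sum
\[
f(0, 2k+2) = 1 + \sum_{j=0}^{k-1} 3^j = \tfrac{1}{2}(3^k + 1),
\]
which is the claimed formula.

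The only delicate point is verifying the freeness of the $S_3$ action on Tait colorings in part $(b)$, and it is precisely the presence of $\vv$ that makes this work: because $\vv$ alone displays all three colors, no non-identity color permutation can fix any coloring of $\vv \cup k\e$, so equivalence classes can be counted as plain colorings of the $k$ free edges once a canonical coloring of $\vv$ is fixed. With this observation in place, both parts follow by routine bookkeeping from material already established.
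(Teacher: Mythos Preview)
Your argument is correct, but it follows a different route from the paper's. The paper proves both parts via the iterated form \eqref{recurrence-f} of the recurrence, $f(n,m)=\sum_{i=0}^{k}f(n+i,m-2k+i)\binom{k}{i}$, pushing all the way down to the tree diagonal $n=m-2$ where $f(n,n+2)=2^{n-1}$, and then invoking the binomial theorem. You instead establish $(b)$ first by a direct symmetry-breaking count (exactly the argument already used for $\rho(m)$ in Lemma~\ref{num-states-minmultipol}$(a)$), and then feed $(b)$ back into the single-step recurrence $f(0,2k+2)=f(0,2k)+f(1,2k+1)$ to obtain $(a)$ as a telescoping geometric sum. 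Your approach is more self-contained and avoids both the iterated binomial identity and the tree-state formula; the paper's approach, on the other hand, illustrates how the general formula \eqref{recurrence-f} specializes, and treats the two cases uniformly rather than making one depend on the other. Either way the computations are routine once the recurrence of the preceding lemma is in hand.
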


\begin{proof}
Note that if $n=m-2>0$, then $f(n,m)=2^{n-1}$ because the corresponding $m$-pole is a tree. If $n=0$, then $f(0,2)=1$. %See Fig. \ref{forest-multipoles-table}.
Using (\ref{recurrence-f}):
\vskip 8pt
($a$) In the first case, we have:
\begin{align*}
f(0,2k+2)= &\sum_{i=0}^k f(i,i+2) {k \choose i} = f(0,2) + \sum_{i=1}^k f(i,i+2) {k \choose i}\\
= & 1 + \sum_{i=1}^k 2^{i-1} {k \choose i} = 1 + \frac{1}{2} \sum_{i=1}^k 2^i {k \choose i} = 1 + \frac{1}{2}(3^k-1) = \frac{3^k+1}{2}.
\end{align*}
\vskip 8pt
($b$) In the second case, we have:
$$f(1,2k+3)=\sum_{i=0}^k f(i+1,i+3) {k \choose i} = \sum_{i=0}^k 2^i {k \choose i} = 3^k.$$
\end{proof}

The following result, consequence of Lemma \ref{number-states-minimal}, implies that the minimum number of states of a forest $m$-pole $M$ is attained when $M$ is a minimal $m$-pole.

\begin{propo}
For a fixed even value of $m$, the minimum of $f(n,m)$ is $f(0,m)$, and for a fixed odd value of $m$, the minimum of $f(n,m)$ is $f(1,m)$.
\end{propo}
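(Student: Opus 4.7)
My plan is to turn the recurrence \eqref{recurrence-f} into a closed form by making a single sharp choice of $k$. Since every forest $m$-pole satisfies $n \equiv m \pmod 2$, I would set $k = (m-n-2)/2$, which is a nonnegative integer and for which the hypothesis $n \leq m - 2(k+1) = n$ holds with equality. Under this choice every summand of \eqref{recurrence-f} collapses to a tree count of the form $\binom{k}{i} f(n+i, n+i+2)$, since $(n+i)$-poles with $n+i$ vertices are trees.

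Next I would substitute the tree values $f(n+i, n+i+2) = 2^{n+i-1}$ (valid whenever $n+i \geq 1$, as argued at the start of Section 6 via the easy recursion $t(m) = 2t(m-1)$) and apply $\sum_{i=0}^{k} 2^i \binom{k}{i} = 3^k$. For $n \geq 1$ this should yield the clean closed form
$$
f(n,m) = 2^{n-1}\cdot 3^{(m-n-2)/2}.
$$
Then $f(n+2,m)/f(n,m) = 4/3 > 1$, so $f(n,m)$ is strictly increasing in $n$ on the admissible range beginning with $n=1$ (resp.\ $n=2$) when $m$ is odd (resp.\ even). This already closes the odd case, the minimum being $f(1,m)$ as claimed.

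To finish the even case it then suffices to compare $f(0,m)$ and $f(2,m)$. Using Lemma \ref{number-states-minimal}(a) for the first and the closed form above for the second,
$$
f(0,m) = \tfrac{1}{2}\bigl(3^{(m-2)/2}+1\bigr), \qquad f(2,m) = 2\cdot 3^{(m-4)/2},
$$
so $f(0,m) \leq f(2,m)$ reduces after clearing denominators to $3^{(m-4)/2} \geq 1$, which holds for every even $m \geq 4$ (with equality only at $m=4$).

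The only delicate point, and the reason the even case needs the separate comparison instead of fitting into the uniform formula, is the index edge case $n = 0$: the tree identity $f(n+i,n+i+2) = 2^{n+i-1}$ breaks at $i=0$ because $f(0,2) = 1$ rather than $2^{-1}$. I would handle this by isolating the $i=0$ term, which reproduces the Lemma \ref{number-states-minimal}(a) formula used above and makes the final comparison transparent.
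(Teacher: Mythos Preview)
Your argument is correct and follows essentially the same route as the paper: derive the closed form $f(n,m)=2^{n-1}\cdot 3^{(m-n)/2-1}$ for $1\le n\le m-2$ from recurrence \eqref{recurrence-f} with the choice $k=(m-n-2)/2$, then use monotonicity in $n$. In fact your treatment is more careful than the paper's, which states the monotonicity conclusion for all $n_1\le n_2\le m-2$ without explicitly verifying the edge case $n_1=0$; your separate comparison $f(0,m)\le f(2,m)$ fills exactly that gap.
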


\begin{proof}
If $m$ is even, then $f(0,m)=\frac{1}{2}(3^{\frac{m-2}{2}}+1)$. If $m$ is odd, then $f(1,m)=3^{\frac{m-3}{2}}$. In both cases we obtain $\rho(m)$. To see that this is the minimum, note that recurrence (\ref{recurrence-f}) implies $f(n,m)=2^{n-1}\cdot 3^{\frac{m-n}{2}-1}$ for $1 \le n \le m-2$. As a consequence, if $n_1 \le n_2 \le m-2$, then $f(n_1,m) \le f(n_2,m)$. %See Fig. \ref{forest-multipoles-table}.
\end{proof}

\begin{figure}[h]
\begin{center}
%\vskip-1cm
\includegraphics[width=11cm]{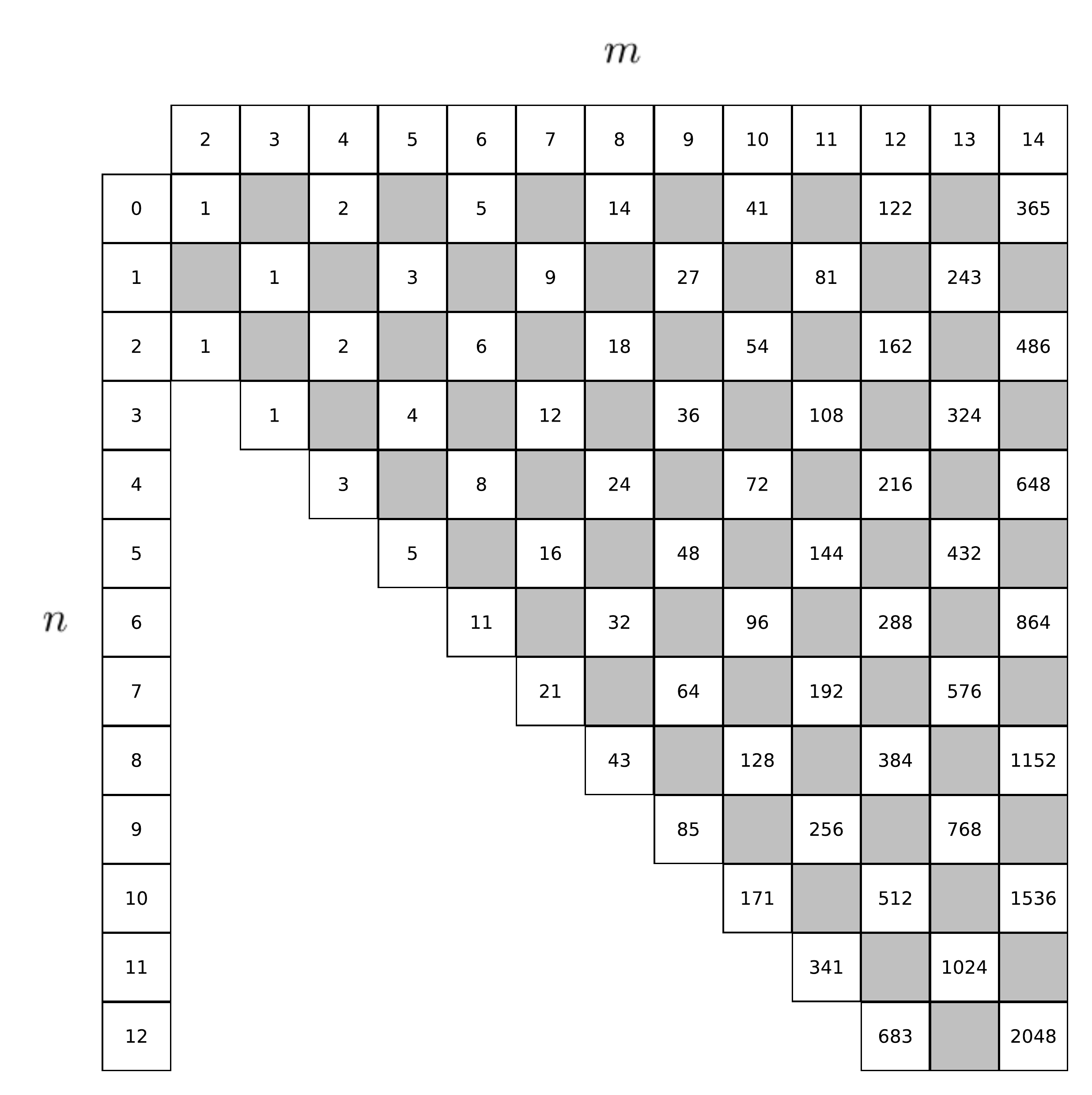}
\caption{Numbers of states of forest $m$-poles with $n$ vertices. The rows $n=0$ and $n=1$ correspond to minimal $m$-poles, and the diagonal $n=m-2$ corresponds to tree $m$-poles. The numbers of states of cycle $m$-poles, corresponding to the diagonal $n=m$, are also represented.}
\label{forest-multipoles-table}
\end{center}
\end{figure}

\section{Cycles}

In this section we prove the irreducibility of every cycle multipole for $m \ge 5$, and calculate the number of states of an arbitrary cycle multipole with $m \ge 1$.

\begin{propo}
The number $c(m)$ of states of a cycle $m$-pole $C_m$ is
$$
\textstyle c(m)=\frac{1}{3}(2^{m-1}+(-1)^m).
$$
\end{propo}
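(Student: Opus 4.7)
The plan is to count Tait colorings of the cycle $m$-pole and then pass to states by exploiting a free action of the color-permutation group $S_3$. First I would observe that a Tait coloring of $C_m$ is determined by the colors of its $m$ cycle edges: at each vertex $v_i$, once the two incident cycle edges $e_{i-1}$ and $e_i$ receive distinct colors (as required at a cubic vertex), the semiedge $\epsilon_i$ is forced to take the third color. Hence the number of Tait colorings of $C_m$ equals the number of proper $3$-colorings of the $m$-cycle viewed as a graph, namely the chromatic polynomial $P(C_m,3)=2^m+2(-1)^m$.

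Next, the group $S_3$ acts on Tait colorings by permuting colors, and I would argue this action is free for $m\ge 2$: at any vertex all three colors $1,2,3$ appear among the two cycle edges and the semiedge, so no non-trivial $\pi\in S_3$ can fix a Tait coloring pointwise. Consequently the number of $S_3$-orbits of Tait colorings is
\[
\frac{2^m+2(-1)^m}{6}=\frac{1}{3}\left(2^{m-1}+(-1)^m\right).
\]

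It remains to identify these orbits of Tait colorings with the states of $C_m$, i.e. with the $S_3$-orbits of realized semiedge tuples in $\Col(C_m)$. The projection $\Phi$ sending a Tait coloring $\phi$ to $(\phi(\epsilon_1),\ldots,\phi(\epsilon_m))$ is $S_3$-equivariant and surjects onto $\Col(C_m)$; the induced map on orbits is automatically surjective, and I would check injectivity via a short fiber analysis. The only tuples with nontrivial stabilizer in $S_3$ are the constant tuples $(c,\ldots,c)$, stabilized by the transposition of the other two colors; the fiber above such a tuple consists of the two alternating edge-colorings of the cycle (which exist only when $m$ is even) and these are swapped by that transposition. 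Above every non-constant realized tuple the fiber is a singleton, since the cycle-edge colors are forced by the run structure of the tuple: at each boundary $c_i\neq c_{i+1}$ the edge $e_i$ must be the unique color outside $\{c_i,c_{i+1}\}$, and within a maximal run of equal $c_i$ the edge colors are then determined by alternation. Combining these observations, $S_3$-orbits of Tait colorings are in bijection with states, and the count from the second step gives the desired formula.

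The main obstacle is the fiber analysis in the last step: one has to verify cleanly that non-constant realized tuples admit exactly one Tait coloring, while constant tuples admit exactly two swapped by the stabilizer. Once this is established, the free-action count $R(m)/6$ immediately yields $c(m)=\tfrac{1}{3}(2^{m-1}+(-1)^m)$, including the degenerate case $m=1$ where $C_1$ has no Tait coloring.
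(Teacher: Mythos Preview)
Your argument is correct and follows essentially the same route as the paper: both reduce to counting proper $3$-edge-colorings of the $m$-cycle (the paper via the recurrence $c(m)=c(m-1)+2c(m-2)$, you via the chromatic polynomial $P(C_m,3)=2^m+2(-1)^m$) and then divide by $|S_3|=6$. Your fiber analysis justifying this division---handling the constant semiedge tuples whose $S_3$-stabilizer is nontrivial but whose two preimage colorings are swapped by that stabilizer---is in fact more careful than the paper, which simply asserts that one must divide by $3!$ without checking that orbits of colorings and orbits of semiedge tuples coincide.
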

\begin{proof}
The coloring of the semiedges is determined by the coloring of the edges of the cycle. Then, the number $c(m)$ verifies the following recurrence:
$$
c(m)=c(m-1)+2c(m-2).
$$
To show this, consider one arbitrary edge of the cycle, and the two edges adjacent to it. If these two edges have different color, the color of the middle edge is forced. If they have the same color, there are two possibilities for the color of the middle edge. In the first case, we delete the middle edge and make its two adjacent edges adjacent to each other: the resulting cycle has $m-1$ edges and $c(m-1)$ 3-edge-colorings. In the second case, we delete the middle edge and identify its two adjacent edges as if they were one: the resulting cycle has $m-2$ edges and $c(m-2)$ 3-edge-colorings, but in this case the deleted edge had two possible colors. This gives the recurrence for $c(m)$. Its initial values are $c(1)=0$ and $c(2)=6$. Using induction on $m$, or the same technique as in Section 3, we can prove that $c(m)=2^m+2(-1)^m$. However, in a state of a multipole the 3 colors are interchangeable: we must divide by $3!$ to obtain the final result.
\end{proof}

Now we show that, as the tree multipoles, cycle multipoles are also irreducible. To this end, we need some previous lemmas.

\begin{lema}
\label{lema-c1}
Let $\ep_1,\ep_2,\ep_3$ be three $(ordered)$ semiedges of a $m$-tree multipole $T_m$ Then, the state $(a,b,a)$ is realizable unless
one of the following conditions hold:
\begin{itemize}
\item[$(i)$]
$\dist(\ep_1,\ep_3)=0$.
\item[$(ii)$]
$\dist(\ep_1,\ep_2)=\dist(\ep_2,\ep_3)=1$.
\end{itemize}
\end{lema}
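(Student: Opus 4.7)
The plan is to prove both directions. For necessity, (i) is immediate — two semiedges incident to a common vertex must receive distinct colors, so they cannot both be colored $a$. For (ii), let $u_1, u_2, u_3$ be the three pairwise-distinct vertices with $u_1u_2, u_2u_3 \in E(T_m)$. At $u_2$ the three incident colors exhaust $\{a,b,c\}$, so $\phi(\epsilon_2) = b$ forces $\{\phi(u_1u_2), \phi(u_2u_3)\} = \{a,c\}$. But $\phi(\epsilon_1) = a$ at $u_1$ forces $\phi(u_1u_2) \ne a$, hence $\phi(u_1u_2) = c$; symmetrically $\phi(u_2u_3) = c$, contradicting distinctness.

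For sufficiency, write $u_i$ for the vertex incident to $\epsilon_i$ and assume neither (i) nor (ii) holds. The argument uses the following flexibility property of tree multipoles: for any two semiedges $\rho, \sigma$ at vertices $u, v$ of a tree multipole and any colors $x, y$, there is a Tait coloring with $\phi(\rho) = x$ and $\phi(\sigma) = y$ unless $u = v$ and $x = y$. This is obtained by coloring successively along the unique $u$--$v$ path, using the freedom in the off-path subtree colors at each internal vertex to steer the path-edge colors, and then extending the coloring freely into every off-path subtree (any tree multipole accepts any prescribed color on a single semiedge by a trivial inductive construction).

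The proof then splits on the Steiner structure of $\{u_1, u_2, u_3\}$. If $u_1 = u_2$ (the case $u_2 = u_3$ is symmetric), the third element at this vertex must be an edge to the rest of $T_m$ — else $T_m = \vv$ and $u_1 = u_3$, contradicting (i) — and it is forced to color $c$; applying the flexibility property to the remainder with root color $c \ne a$ and target $\phi(\epsilon_3) = a$ completes the coloring. Otherwise $u_1, u_2, u_3$ are pairwise distinct, and either $u_2$ lies on the path between $u_1$ and $u_3$ (collinear case) or the three paths meet at a Steiner vertex $s \notin \{u_1, u_2, u_3\}$ (Y-shape). In the collinear case, the two path edges at $u_2$ must carry $\{a, c\}$ in some order; a branch of length $1$ forces its edge color (the unique one avoiding both $a$ and the far-end semiedge color), and the two orderings become simultaneously incompatible exactly when both branches have length $1$, which is condition (ii). In the Y-shape case one picks a permutation $(x_1, x_2, x_3)$ of $\{1,2,3\}$ for the edges at $s$ with $x_i \ne \phi(\epsilon_i)$ whenever the $i$-th branch has length $1$; a short finite check over the patterns of short branches always yields a valid permutation — note that if all three branches have length $1$ then pairwise distances are $2$, so (ii) is never activated.

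The main obstacle is the finite case-check showing that the central-edge constraint system fails exactly in configuration (ii) and nowhere else; once that is verified, the rest of the coloring propagates along branches (forced when a branch has length $1$, flexible otherwise) and into every off-path subtree via the one-semiedge version of the flexibility property.
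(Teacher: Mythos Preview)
Your argument is considerably more thorough than the paper's, which essentially just asserts the two-semiedge flexibility (``as $\ep_1$ and $\ep_2$ are distinct, a coloring of the form $(a,b,*)$ is always possible'') and then declares, with only the necessity computation for (ii) as justification, that (i) and (ii) are the sole obstructions to setting $*=a$. Your explicit Steiner-structure case analysis is a genuine gain in rigor over that sketch.

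There is, however, a gap in your dichotomy for pairwise distinct $u_1,u_2,u_3$. In a tree the three pairwise paths always meet in a single median vertex $s$; you treat $s=u_2$ (your collinear case) and $s\notin\{u_1,u_2,u_3\}$ (your Y-shape), but you omit the possibilities $s=u_1$ and $s=u_3$. These two missing cases are symmetric to one another under $\ep_1\leftrightarrow\ep_3$, but \emph{not} to the $s=u_2$ case you do handle, since the target $(a,b,a)$ singles out position~$2$. They are easy to dispatch: if, say, $u_1$ is the median, then the two path-edges at $u_1$ must carry $\{b,c\}$; a length-$1$ branch toward $u_2$ forces that edge to $c$ (it must avoid $b$), while a length-$1$ branch toward $u_3$ need only avoid $a$, which both $b$ and $c$ already do --- so the two constraints at $u_1$ never collide. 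This is consistent with the observation that when $u_1$ lies strictly between $u_2$ and $u_3$ one has $\dist(\ep_2,\ep_3)\ge 2$, so condition~(ii) cannot hold anyway. Once you insert this short paragraph (and its mirror for $s=u_3$), the proof is complete and substantially cleaner than the paper's.
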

\begin{proof}
Let $\ep_i=(v_i)$, $i=1,2,3$.
First note that, as $\ep_1$ and $\ep_2$ are distinct, a coloring of the form $(a,b,*)$ is always possible.
Then, the only cases where color $*$ cannot be $a$ are
$(i)$ (trivially since the $v_1=v_2$) and $(ii)$ since, then, edge $v_1v_2$ must have color $c$ and, hence, edge $v_2v_3$ must have color $a$ which is forbidden.
\end{proof}

\begin{figure}[t]
\begin{center}
\vskip-1cm
\includegraphics[width=7cm]{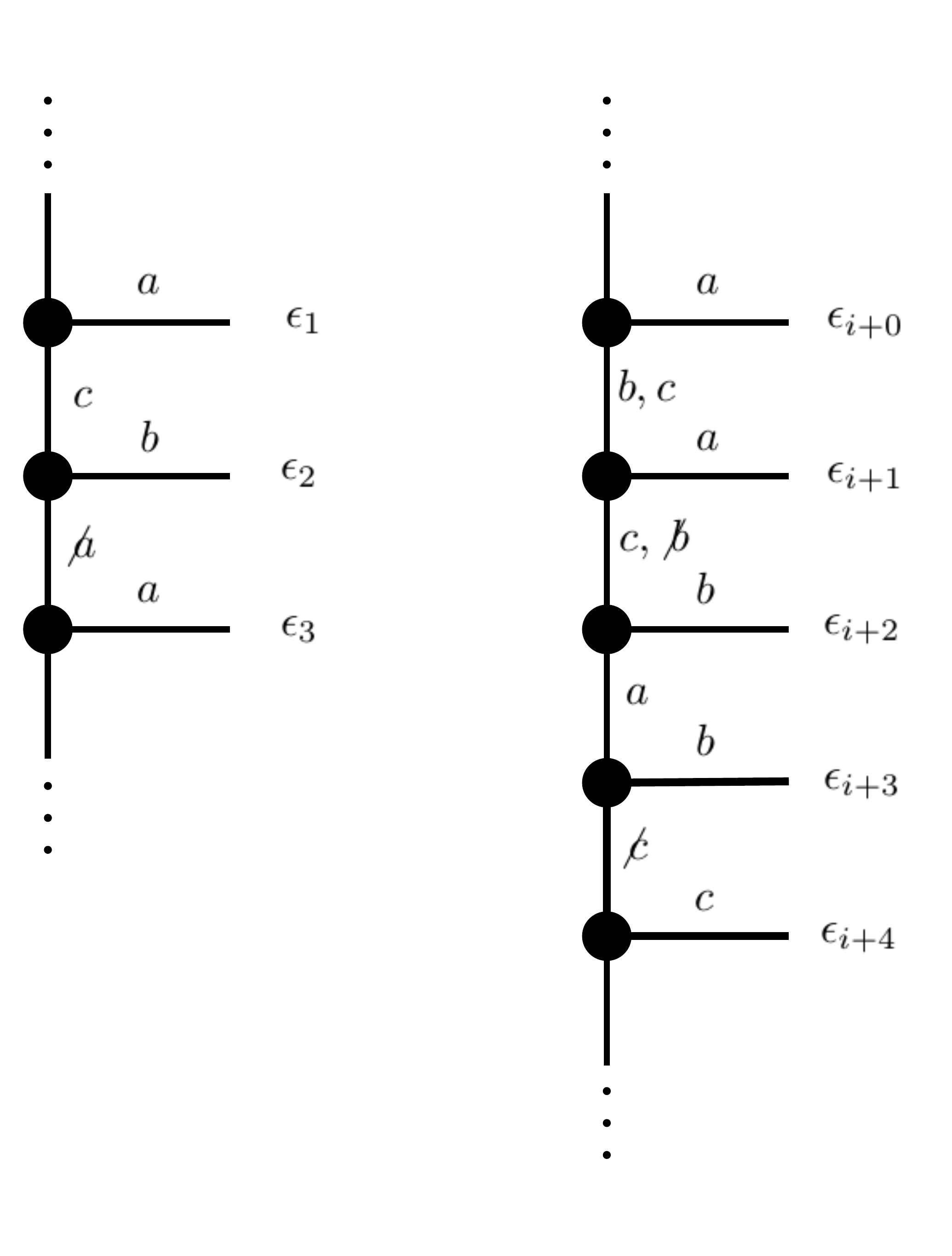}
\caption{Non-realizable states}
\label{forbiden-patterns}
\end{center}
\end{figure}

\begin{lema}
\label{lema-c2}
Let $\ep_i,\ep_{i+1},\ldots,\ep_{i+r-1}$, be $r$ successive semiedges of a $m$-cycle multipole $C_m$ (subindexes understood modulo $m$). Then, the following states are not realizable:
\begin{itemize}
\item[$(i)$]
For $r=3$, $(a,b,a)$.
\item[$(i)$]
For $r=5$, $(a,a,b,b,c)$.
\end{itemize}
\end{lema}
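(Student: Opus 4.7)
The plan is a short direct case analysis by propagating constraints along the cycle. The key observation I would use is that at each vertex $v_j$ of $C_m$, the two incident cycle edges (call them $e_{j-1}$ and $e_j$, where $e_j$ joins $v_j$ to $v_{j+1}$) together with the semiedge $\ep_j=(v_j)$ must receive the three colors, so once $\phi(\ep_j)$ is fixed the \emph{unordered} pair $\{\phi(e_{j-1}),\phi(e_j)\}$ is forced to be the two remaining colors. Hence any hypothetical coloring realizing the forbidden state propagates strong local constraints on the cycle edges, and in each case it is enough to chase these constraints through the $r$ consecutive vertices until two incompatible conditions on the same edge appear.

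For part $(i)$ I would simply read off the three local constraints at $v_i,v_{i+1},v_{i+2}$: they give $\{e_{i-1},e_i\}=\{b,c\}$, $\{e_i,e_{i+1}\}=\{a,c\}$, and $\{e_{i+1},e_{i+2}\}=\{b,c\}$. The first two pairs share the edge $e_i$ and intersect only in color $c$, which pins $\phi(e_i)=c$ and hence $\phi(e_{i+1})=a$; but the third pair demands $\phi(e_{i+1})\in\{b,c\}$, a contradiction.

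For part $(ii)$ the argument is of the same type, just one link longer. From the two consecutive $a$-semiedges at $v_i,v_{i+1}$ one reads $\{e_{i-1},e_i\}=\{e_i,e_{i+1}\}=\{b,c\}$, and combining with $\{e_{i+1},e_{i+2}\}=\{a,c\}$ from $\ep_{i+2}=b$ pins $\phi(e_{i+1})=c$, hence $\phi(e_{i+2})=a$. One further step through $\ep_{i+3}=b$ yields $\phi(e_{i+3})=c$, contradicting the requirement $\phi(e_{i+3})\in\{a,b\}$ imposed by $\ep_{i+4}=c$.

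No genuine obstacle is expected: the lemma just records two small forbidden local patterns, and the proof is pure local bookkeeping. I expect its purpose to be feeding the irreducibility proof for cycle multipoles announced at the start of the section, where having these two forbidden patterns available will let us distinguish $\Col(C_m)$ from the state set of any would-be smaller replacement multipole.
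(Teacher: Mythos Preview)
Your proof is correct and follows essentially the same approach as the paper: a direct local propagation of color constraints along the cycle edges between the $r$ consecutive vertices. The only cosmetic difference is that for part $(ii)$ the paper splits into the two cases $\phi(v_iv_{i+1})\in\{b,c\}$ and chases each to a contradiction, whereas you use the unordered-pair observation to intersect constraints and pin $\phi(e_{i+1})=c$ without a case split; the underlying computation is the same.
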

\begin{proof}
Let $\ep_{i+j}=(v_{i+j})$, $j=0,\ldots,4$.
Case $(i)$ is proved as in Lemma \ref{lema-c1}.
Case $(ii)$ follows also easily by considering the two different possible colors, $b$ and $c$, of edge $v_iv_{i+1}$ and concluding that both lead to a contradiction.
\end{proof}

\begin{figure}[h]
\begin{center}
%\vskip-1cm
\includegraphics[width=15cm]{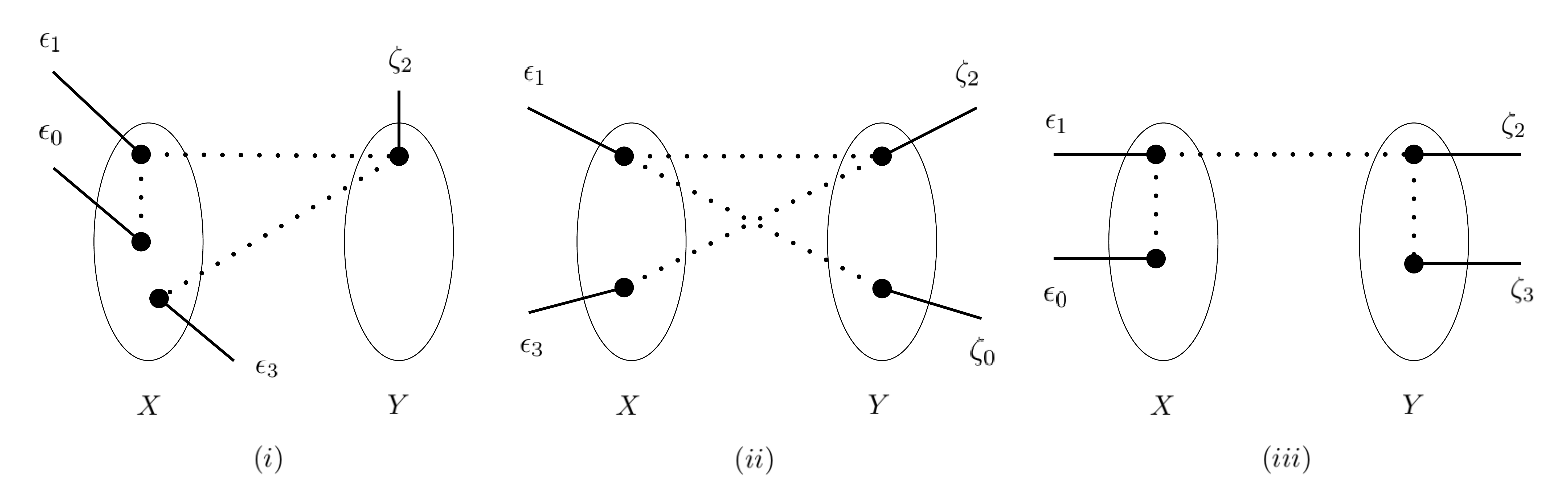}
\caption{Possible successions of semiedges}
\label{success-semiedges}
\end{center}
\end{figure}

\begin{propo}
Every cycle multipole $C_m$ with $m\ge 5$ is irreducible.
\end{propo}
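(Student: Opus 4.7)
My plan is to argue by contradiction. Suppose $N$ is an $m$-pole with $|V(N)| < m$ and $\emptyset \neq \Col(N) \subseteq \Col(C_m)$, where the semiedges of $N$ carry the cyclic labeling $\epsilon_1, \ldots, \epsilon_m$ inherited from $C_m$. The strategy is to leverage the forbidden patterns of Lemma \ref{lema-c2} together with the freedom to independently permute the three colors inside each connected component of $N$ (since permuting colors within a component preserves Tait-validity and components are disjoint).

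First, I would exploit Lemma \ref{lema-c2}(i): the state $(a,b,a)$ with $a \neq b$ on three cyclically consecutive semiedges is not realizable in $C_m$, hence not in $N$. If the three semiedges $\epsilon_i,\epsilon_{i+1},\epsilon_{i+2}$ lie in three distinct components of $N$, then for any $a,b$ the pattern $(a,b,a)$ can be realized by independent color permutations in each component, a contradiction. So every cyclic triple contains two semiedges in a common component. Refining: when exactly two lie in a shared component $K$ (the third in a different one $K'$), since any color can be forced on the third semiedge by permuting within $K'$, we deduce that every Tait coloring of $K$ must satisfy $\epsilon_i = \epsilon_{i+1}$ (when these are the shared pair), $\epsilon_{i+1} = \epsilon_{i+2}$ (when those are), or $\epsilon_i \neq \epsilon_{i+2}$ (when the outer pair is shared).

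Next, I would apply Lemma \ref{lema-c2}(ii), which forbids $(a,a,b,b,c)$ on five consecutive semiedges. Running the same color-permutation argument over all distributions of a 5-tuple of semiedges into components yields further restrictions; in particular, it caps the number of ``independent'' components that can appear in any window of five, and rules out sparse configurations where semiedges are spread over many small components.

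Finally, I would combine these local-cyclic constraints with Lemma \ref{lema-forests} (giving $|V(N)| \ge m-2c$ where $c$ is the number of components of $N$) and Lemma \ref{lema-c1} (for tree components, the forbidden $(a,b,a)$ forces explicit distance conditions). A forced-equal pair in a component essentially requires the two semiedges to be coupled (e.g., forming a free edge or being color-equivalent to one), while forced-distinct pairs impose geometric rigidity. Checking all resulting configurations, the cyclic system of constraints carried around all $m$ positions forces $N$ to carry at least $m$ vertices' worth of structure, contradicting $|V(N)| < m$. The main obstacle lies precisely in this final step: the cyclic interaction between the triple-constraints and the 5-tuple constraints, combined with the possibly irregular component structure of $N$, requires a careful case analysis to verify that no ``vertex-saving'' configuration is consistent with all constraints simultaneously.
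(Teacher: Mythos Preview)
Your proposal shares the paper's overall shape (contradiction, Lemmas \ref{lema-c1} and \ref{lema-c2} as the source of forbidden local patterns), but it has a genuine gap at the point you yourself flag as ``the main obstacle.'' The difficulty is not merely that the case analysis is tedious; it is that your principal tool---independent color permutations within components---is strictly weaker than what the paper actually needs, namely Kempe-chain interchanges.

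Concretely, consider a candidate $N$ with two components $X,Y$ and four cyclically consecutive semiedges $\epsilon_0,\epsilon_1\in X$, $\zeta_2\in Y$, $\epsilon_3\in X$. Your triple-argument correctly yields that \emph{every} Tait coloring $\phi$ of $X$ satisfies $\phi(\epsilon_0)=\phi(\epsilon_1)$ and $\phi(\epsilon_1)\neq\phi(\epsilon_3)$. But these relations are invariant under any global permutation of colors in $X$, so permuting colors produces no new information and no contradiction. The paper breaks this impasse by running an $a$--$b$ Kempe chain inside $X$ starting at $\epsilon_3$: the resulting coloring $\phi'$ need \emph{not} respect the equalities above, and one then checks that it forces a forbidden $(a,b,a)$ or $(a,a,b,b,c)$ pattern. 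Analogous Kempe arguments are required in several of the two- and three-component subcases. Your outline never invokes Kempe chains, and without them it is unclear how you would eliminate configurations like this one; the appeal to ``cumulative cyclic constraints forcing at least $m$ vertices' worth of structure'' is suggestive but not a proof. To close the argument you would either have to carry out the explicit case split (tree; two components with the three interleaving patterns; three or more components) using Kempe interchanges, essentially reproducing the paper's proof, or find a genuinely new global invariant---which your sketch does not supply.
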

\begin{proof}
The proof is by contradiction. Assume that $C_m$ is reducible to an $m$-pole $T$. As $|T|\le m-2$, $T$ is either a tree multipole or it has at least two components. Since $\Col(T)\subseteq \Col(C_m)$, there must be a cyclic ordering of the semiedges of $T$, $(\ep_0,\ldots,\ep_{m-1},\ep_0,\ldots)$, corresponding to the successive semiedges of $C_m$, such that every state of $T$ is also realizable in $C_m$.
Hovewer, we will prove that, no matter how we choose such an ordering, this is not the case. We consider three cases:
\begin{itemize}
\item[$(a)$] $T$ is a tree.
\begin{itemize}
\item[$(a1)$]
If $m$ is even, it could be that all the semiedges of $T$ come into pairs (that is, every pair of them is incident with a given vertex). Without loss of generality, suppose that $\ep_i$ and $\ep_{i+1}$ are incident to $u_i$ for $i=0,2,\ldots,(m-2)/2$. Then, by Lemma \ref{lema-c1}, in the cyclic ordering and in order to avoid the states $(\ldots,a,b,a,\ldots)$ we are forced to take $(\ldots,\ep_1,\ep_j,\ep_2,\ep_{j+1},\ldots)$ with $j \ge 3$. But every choice for the semiedge $\ep_k$ next to $\ep_{j+1}$ allows the state $(a,b,a)$ for the semiedges  $(\ep_2,\ep_{j+1},\ep_k)$, which is not realizable in $C_m$ by Lemma \ref{lema-c2}.
\item[$(a2)$]
If $m$ is odd, $T$ must have some semiedge $\zeta$ at nonzero distance from every other semiedge, and also some pair $(\ep_i,\ep_{i+1})$ of incident semiedges. Thus, at some point of the cyclic ordering, we must have $(\ldots,\zeta,\ep_j,\ep_k,\ldots)$ where $j\in\{i,i+1\}$. Then, again, all possible choices of $\ep_k$ allow the state $(a,b,a)$.

\end{itemize}

\item[$(b)$] $T$ has two components $X,Y$: Let $X$ and $Y$ have semiedges $\ep_i$ and $\zeta_j$, respectively. Notice that either $X$ or $Y$ has at least three semiedges. Then, at some point of the cyclic ordering, we must go from, say, $\ep_1\in \Ei(X)$ to $\zeta_2\in \Ei(Y)$. Up to symmetries, there are three possibilities for the next semiedges to $\ep_1$ and $\zeta_2$ (see Fig. \ref{success-semiedges} where `cyclic adjacency' between semiedges is represented by dotted lines):

\begin{itemize}

\item[$(b1)$] Case $(\ldots,\ep_0,\ep_1,\zeta_2,\ep_3,\ldots)$: Here, for any Tait coloring $\phi$ of $T$, and in order to avoid the state $(a,b,a)$ for the semiedge triples $(\ep_0,\ep_1,\zeta_2)$ and
        $(\ep_1,\zeta_2,\ep_3)$, it must be $\phi(\ep_0)=\phi(\ep_1)$ and $\phi(\ep_1)\neq \phi(\ep_3)$. Thus, the state of the successive semiedges must be $(a,a,*,b)$. Now consider the $a$-$b$ Kempe chain in $X$, starting from $\ep_3$ and interchange its colors to get the new Tait coloring $\phi'$. If it ends to a semiedge different from $\ep_0,\ep_1$ we get the new state $(a,a,*,a)$. Otherwise, if it ends to, say, $\ep_0$ we get $(b,a,*,b)$. In both cases we can adequately chose the color $*$ to get the state $(a,b,a)$ for three successive semiedges.

\item[$(b2)$] Case $(\ldots,\zeta_0,\ep_1,\zeta_2,\ep_3,\ldots)$: To avoid the state $(a,b,a)$ for the two semiedge triples it must be $\phi(\zeta_0)\neq\phi(\zeta_2)$ and  $\phi(\ep_1)\neq \phi(\ep_3)$. Thus, the state of the \mbox{successive} semiedges can be chosen to be $(a,a,b,b)$. Let us consider the \mbox{different} possibilities for the semiedge following $\ep_3$. If this is $\ep_4\in \Ei(X)$, the semiedges $(\ep_1,\zeta_2,\ep_3,\ep_4)$ follow the pattern as in $(i)$, which has already been dealt with. Alternatively, if such a semiedge is $\zeta_4\in \Ei(Y)$, it cannot have color $c$ because of Lemma \ref{lema-c2}$(ii)$. Thus, suppose that $\zeta_4$ has a color different from $c$, say $a$, and consider the $a$-$b$ Kempe chain in $Y$ starting from  $\zeta_4$ and interchange its colors. Then, reasoning as above, we obtain the realizable state $(a,b,a)$ for three successive semiedges.

\item[$(b3)$] Case $(\ldots,\ep_0,\ep_1,\zeta_2,\zeta_3,\ldots)$: Now, to avoid the state $(a,b,a)$ for the two semiedge triples it must be $\phi(\ep_0)=\phi(\ep_1)$ and  $\phi(\zeta_2)=\phi(\zeta_3)$, so that we can chose the state of the successive semiedges to be $(a,a,b,b)$. Thus, the semiedge following $\zeta_3$ cannot have color $c$ because of Lemma  \ref{lema-c2}$(ii)$. For the situation of such a semiedge, we consider again two possibilities: If $\zeta_4\in Y$
            and it has color $a$, the interchanging of color in the $a$-$c$ Kempe chain leads again to the state $(a,a,b,b,c)$. Otherwise, if $\zeta_4$ has color $b$, the $b$-$c$ Kempe chain leads again to either such a state or to the color $c$ for some of the semiedges $\zeta_2$ or $\zeta_3$. But, again, both alternatives led to a realizable state $(b,c,b)$ (changing the color of $\ep_1$ from $a$ to $b$, if necessary). Finally,
            if $\ep_4\in X$ and it has colors $a$ or $b$, we reason as above with the respective $a$-$c$ or $b$-$c$ Kempe chains to reach the
            states of type $(a,a,b,b,c)$ or $(a,b,a)$.
\end{itemize}

\item[$(c)$] $T$ has at least three components $X,Y,Z$:
Denote the semiedges of $X$ and $Y$ as above and let $Z$ have semiedges $\eta_k$.
Note that in the cyclic ordering, and to avoid the state $(a,b,a)$, we cannot have 3 successive semiedges in different components $X,Y,Z$.
Then, the only possibility to be considered, being different from those in $(b)$,  is $(\ep_0,\zeta_1,\zeta_2,\eta_3)$. Then it must be $\phi(\zeta_1)=\phi(\zeta_2)$ and the corresponding state can be chosen to be $(c,b,b,a)$. Now let us check the different possibilities for the other semiedge next to $\eta_3$. First, from the above comment, it cannot be in $X$. Second it cannot have color $a$  (Lemma \ref{lema-c2}$(ii)$) and, hence, it cannot be in $Z$. Thus, such a semiedge should be $\zeta_4\in Y$, and should have color $b$ or $c$. But, reasoning as above, the consideration of the $b$-$a$ or $c$-$a$, respectively, Kempe chains starting in $\zeta_4$ leads to the non-desired states of Lemma \ref{lema-c2}.
\end{itemize}
Summarizing, $T$ has always a state not realizable in $C_m$. But this is in contradiction with our assumption. We conclude that $C_m$ is irreducible.
\end{proof}

As every cycle $m$-pole $C_m$ has $m$ vertices, the above proposition allows us to state that if $m \ge 5$ then $v(m)\ge m$.

\section{Conclusions}

We have calculated the exact number of states of tree, cycle, minimal and color complete multipoles with $m$ semiedges, and proved that color complete multipoles exist for every non trivial value of $m$. The minimum order of a color complete multipole has lower and upper linear bounds in terms of $m$. Moreover, we have seen that results on the behavior of the function $v(m)$, which is barely known but relevant in the study of the structure of snarks, can be obtained by the analysis of reducibility of multipoles, in particular tree and cycle multipoles.


\begin{thebibliography}{99} %99

\bibitem{ccw87}
P.J. Cameron, A.G. Chetwynd and J.J. Watkins, Decomposition of snarks, {\em J. Graph Theory} {\bf 11} (1987), 13--19.

\bibitem{cw81}
A. G. Chetwynd and R. J. Wilson, Snarks and supersnarks, in {\em The Theory and Applications of Graphs} (eds. G. Chartrand, Y. Alavi, D.L. Goldsmith, L. Lesniak-Foster and D.R. Lick.) John Wiley \& Sons, New York (1981),  215--241.

\bibitem{f91}
M.A. Fiol, A Boolean algebra approach to the construction of snarks, in \textit{Graph Theory, Combinatorics and Applications}, vol. 1 (eds. Y. Alavi, G. Chartrand, O.R. Oellermann, and A.J. Schwenk) John Wiley \& Sons, New York (1991), 493--524.

\bibitem{fv12}
M.A. Fiol and J. Vilaltella, A simple and fast heuristic algorithm for edge-coloring of graphs, 
%presented at the 5th International Workshop on Optimal Network \mbox{Topologies},
%\textit{IWONT 2012} (2012), 
{\em AKCE Int. J. Graphs  Comb.}, submitted. Available at {\tt arXiv:1210.5176 [math.CO]}.

\bibitem{rgf98}
R. Fritsch and G. Fritsch, {\em The Four-Color Theorem: History, Topological Foundations, and Idea of Proof}. Translated by Julie Peschke. Springer (1998).

\bibitem{g76}
M. Gardner, Mathematical Games: Snarks, Boojums and other conjectures related to the four-color-map theorem, \textit{Sci. Amer} \textbf{234} (1976) 126--130.

\bibitem{go81}
M.K. Goldberg, Construction of class 2 graphs with maximum vertex degree 3, \textit{J. Combin. Theory, Ser. B} \textbf{31} (1981) 282--291.

\bibitem{i75}
R. Isaacs, Infinite families of nontrivial trivalent graphs which are not Tait colorable, {\em Am. Math. Monthly} \textbf{82} (1975), no. 3, 221--239.

\bibitem{jsw80}
F. Jaeger, T. Swart, Conjecture 1, in: M. Deza, I.G. Rosenberg (Eds.), Combinatorics, Vol. 79, in: {\em Ann. Discrete Math.} \textbf{9} (1980), North-Holland, Amsterdam, p. 305. Problem Session.

\bibitem{kmn13}
J. Karab\'a\v{s}, E. M\'ac\v{a}jov\'a, and R. Nedela,
6-decomposition of snarks, {\em European J. Combin.}
{\bf 34} (2013), no. 1, 111--122.

\bibitem{k96}
M. Kochol, Snarks without small cycles, {\em J. Combin. Theory, Ser. B} \textbf{67} (1996), 34--47.

\bibitem{ns96}
R. Nedela and M. Skoviera, Decompositions and reductions of snarks,
{\em J. Graph Theory} {\bf 22} (1996), 253--279.

\bibitem{es98}
E. Steffen, Classification and characterizations of snarks, {\em Discrete Math.} \textbf{188} (1998), no. 1-3, 183--203.

\bibitem{t80}
P.G. Tait, Remarks on the colouring of maps, {\em Proc. Roy. Soc. Edimburgh} \textbf{10} (1880), 501--503, 729.

\bibitem{w83}
J. J. Watkins, On the construction of snarks, {\em Ars Combin.} \textbf{16} (1983), 111--124.

\end{thebibliography}
\end{document}